\def\beq{\begin{equation}}
\def\eeq{\end{equation}}
\def\ba{\begin{array}}
\def\ea{\end{array}}
\newtheorem{thm}{Theorem}[section]
\newtheorem{lm}[thm]{Lemma}
\theoremstyle{definition}
\newtheorem{rem}[thm]{Remark}
\newtheorem{df}[thm]{Definition}
\theoremstyle{remark}
\begin{document}
\pagestyle{plain}
\title{Boundary $C^{\alpha}$-regularity for solutions of elliptic equations with distributional coefficients}

\author{Jingqi Liang\\
 \small{School of Mathematical Sciences, Shanghai Jiao Tong University}\\
 \small{Shanghai, China}\\\\
\small{ Lihe Wang}\\
 \small{Department of Mathematics, University of Iowa, Iowa City, IA, USA;\\ School of Mathematical Sciences, Shanghai Jiao Tong University}\\
 \small{Shanghai, China}\\\\
\small{Chunqin Zhou}\\
 \small{School of Mathematical Sciences, MOE-LSC, CMA-Shanghai, Shanghai Jiao Tong University}\\
 \small{Shanghai, China}}

\footnote{The third author was partially supported by NSFC Grant 12031012.}

\begin{abstract}
  In this paper, we prove the boundary pointwise $C^{0}$-regularity of weak solutions for Dirichlet problem of elliptic equations in divergence form with distributional coefficients, where the boundary value equals to zero. This is a generalization of the interior case. If $\Omega$ satisfies some measure condition at one boundary point, the bilinear mapping $\langle V\cdot,\cdot\rangle$ generalized by distributional coefficient $V$ can be controlled by a constant sufficiently small, the nonhomogeneous terms satisfy some Dini decay conditions, then the solution is continuous at this point in the $L^{2}$ sense.
\end{abstract}

\maketitle

{\bf Keywords:} Elliptic equation, Boundary $C^{0}$-regularity, Distributional coefficients, Dini conditions

\section{Introduction}
In this paper, we aim to prove the boundary pointwise continuity for $W^{1,2}$ weak solutions of the following Dirichlet problem with distributional coefficients,
\begin{equation}\label{1}
\left\{
\begin{array}{rcll}
-\Delta u+Vu&=&-\text{div}\vec{f}+g\qquad&\text{in}~~\Omega_{1},\\
u&=&0\qquad&\text{on}~~\partial_{w}\Omega_{1}, \\
\end{array}
\right.
\end{equation}
where $\Omega$ is an open bounded subset of $\mathbb{R}^{n}$ with $n\geq3$.

Maz'ya and Verbitsky in \cite{MV02,MV06} deeply studied the form boundedness of the following Schr\"{o}dinger operator acting from $W^{1,2}(\mathbb{R}^{n})$ to $W^{-1,2}(\mathbb{R}^{n})$:
$$\mathcal{L}=-\text{div}(\nabla)+V,
$$
and general second order differential operator: $\text{div}(A\nabla )+\vec{b}\cdot\nabla +V$, where $A,~\vec{b},~V$ are real or complex valued distributions on $W^{1,2}(\mathbb{R}^{n})$. The definition of form boundedness on operator $\mathcal{L}$ is that:
\begin{equation*}
|\langle \mathcal{L}u,v\rangle|\leq C\|u\|_{W^{1,2}(\mathbb{R}^{n})}\|v\|_{W^{1,2}(\mathbb{R}^{n})}
\end{equation*}
holds for all $u,v\in C_{0}^{\infty}(\mathbb{R}^{n})$, where $C$ does not depend on $u,v\in C_{0}^{\infty}(\mathbb{R}^{n})$. They have already given the sufficient and necessary conditions on the form boundedness of $\mathcal{L}$, i.e. the bilinear form defined by $\langle Vu,v\rangle=\langle V,uv\rangle$ is bounded on $W^{1,2}(\mathbb{R}^{n})\times W^{1,2}(\mathbb{R}^{n})$
if and only if there exists a vector field $\vec{\Gamma}=(\Gamma_{1},\Gamma_{2},\cdots,\Gamma_{n})\in L_{\text{loc}}^{2}(\mathbb{R}^{n})^{n}$ and $\Gamma_{0}\in L_{\text{loc}}^{2}(\mathbb{R}^{n})$ such that
$$V=\text{div}\vec{\Gamma}+\Gamma_{0}
$$
and $|\Gamma_{i}|^{2}(i=0,1,\cdots,n)$ are admissible measure for $W^{1,2}(\mathbb{R}^{n})$, i.e.
$$\int_{\mathbb{R}^{n}}|\Gamma_{i}(x)|^{2}|u(x)|^{2}dx\leq C\|u\|_{W^{1,2}(\mathbb{R}^{n})}^{2},\quad i=0,1,\cdots,n,
$$
where $C$ does not depend on $u\in C_{0}^{\infty}(\mathbb{R}^{n})$. In fact, the form boundedness of $\mathcal{L}$ provides a reasonable way to define $W^{1,2}$ weak solutions of the equations that $\mathcal{L}u$ satisfies.

For  the existence of solutions, Jaye, Maz'ya and Verbitsky \cite{JMV2012} proved that there exists a positive $W^{1,2}_{\text{loc}}$ solution of homogeneous equation: $\mathcal{L}u=0$ under the assumptions that $\langle V,h^{2}\rangle$ has upper and lower bounds with $\lambda<1$ and $\Lambda>0$. The similar existence result was also obtained for operators of $p$-Laplace type in \cite{JMV2013}. For more research on operators with distributional coefficients, such as the operator on fractional Sobolev spaces, infinitesimal form boundedness, we refer readers to \cite{MV021,MV04,MV05} and references therein.

What we are more concerned with is the regularity of weak solutions for equations with distributional coefficients. Our previous work \cite{LWZ2023} discussed interior regularity of solutions for the following nonhomogeneous equation:
\begin{equation}\label{interior}
-\Delta u+Vu=-\text{div}\vec{f}+g\qquad \text{in}~\Omega,
\end{equation}
where $V\in M(W^{1,2}(\Omega)\rightarrow W^{-1,2}(\Omega))$, $\vec{f}\in L^{2}(\Omega)^{n}$ satisfying that $|\vec{f}|^2$ is an admissible measure for $W_{0}^{1,2}(\Omega)$, and $g\in W^{-1,2}(\Omega)$. Now we recall the main result:
If $V,~\vec{f},~g$ satisfy following Dini decay conditions:
\begin{eqnarray*}
& & |\langle V\psi,\varphi\rangle|\leq \omega_{1}(s)\|\psi\|_{L_{r,s}^{1,2}}\|\nabla\varphi\|_{L^{2}(B_{r})}, ~~~ \forall r<s\leq 2r,\\
& & \displaystyle\int_{B_{r}}|\vec{f}|^{2}|\varphi|^{2}dx\leq (\omega_{2}(r))^{2}\displaystyle\int_{B_{r}}|\nabla\varphi|^{2}dx,\\
& & |\langle g,\varphi\rangle|\leq \frac{\omega_{2}(r)}{r^{2}}|B_{r}|^{\frac{n+2}{2n}}\|\nabla\varphi\|_{L^{2}(B_{r})},
\end{eqnarray*}
where $\|\psi\|_{L_{r,s}^{1,2}}=\displaystyle\frac{\|\psi\|_{L^{2}(B_{s})}}{s-r}+\|\nabla \psi\|_{L^{2}(B_{s})}$,  $\omega_{i}(r)$ is Dini modulus of continuity for $i=1,2$, then $u$ is continuous at 0 in the $L^{2}$ sense. We also give a priori estimate to show how the coefficient and nonhomogeneous terms influence the regularity of solutions. This is a framework result. The $\ln$-Lipschitz regularity and H\"{o}lder regularity can also be obtained as corollaries which cover the classical De Giorgi's H\"{o}lder estimates. Furthermore, local regularity can be deduced by this pointwise regularity.

Then it is natural to ask that how about the $C^{0}$-regularity of solutions up to the boundary? The goal of present paper is to answer this question. Here we only consider the Dirichlet problem with zero boundary value. We need to define weak solutions of $(\ref{1})$ and make suitable assumptions on $\Omega,~V,~\vec{f},~g$. It is time to recall the classical result for special case of $(\ref{1})$: $V\in L^{\frac{q}{2}}(\Omega)$ with $q>n$ and $\vec{f}\in L^{q}(\Omega)^{n}$, $g\in L^{\frac{nq}{n+q}}(\Omega)$. Given by Theorem 8.27 and 8.29 in \cite{DT}, it is well-known that if $\Omega$ satisfies an exterior cone condition at a point $x_{0}\in \partial\Omega$, then $u$ is H\"{o}lder continuous at $x_{0}$. Furthermore, if $\Omega$ satisfies a uniform exterior cone condition on a boundary portion $T$, then there exists a constant $1>\alpha>0$ such that $u\in C^{\alpha}(\Omega\cup T)$. Actually boundary H\"{o}lder regularity still holds when the uniform exterior cone condition is relaxed to the following weakest uniform {\bf $(\bar{r},\nu)$-condition}: there exist constants $\nu>0$ and $0<\bar{r}\leq1$ such that for any $0<r\leq \bar{r}$, any $x_{0}\in\partial\Omega$,
\begin{equation}\label{omega}
\frac{|B_{r}(x_{0})\cap\Omega^{c}|}{|B_{r}(x_{0})|}\geq\nu.
\end{equation}
Based on this, we  assume that $(\ref{omega})$ holds for any $x_{0}\in\partial\Omega$ uniformly in this paper. Without loss of generality, we can assume $\bar{r}=1$. Besides, the assumptions on $V,~\vec{f},~g$ are almost parallel to the interior case. Before we state the main theorem, we give some notations and definitions.\\
{\bf Notations:}
(1) $|x|:=\sqrt{\sum\limits_{i=1}^{n} x_{i}^{2}}$: the Euclidean norm of $x=\left(x_{1}, x_{2}, \ldots, x_{n}\right) \in \mathbb{R}^{n}$.

(2) $B_{r}(x_{0}):=\left\{x \in \mathbb{R}^{n}:|x-x_{0}|<r\right\}$.

(3) $B_{r}:=\left\{x \in \mathbb{R}^{n}:|x|<r\right\}$.

(4) $\vec{a}\cdot\vec{b}$: the standard inner product of $\vec{a}, \vec{b} \in \mathbb{R}^{n}$.

(5) $\Omega_{r}=B_{r}\cap\Omega$, $\partial\Omega_{r}$ is the boundary of $\Omega_{r}$, $\partial_{w}\Omega_{r}=\partial\Omega\cap B_{r}$.

(6) $\Omega_{r}(x_{0})=B_{r}(x_{0})\cap\Omega$, $\partial_{w}\Omega_{r}(x_{0})=\partial\Omega\cap B_{r}(x_{0})$.\\

In the sequel, for general domain $\Omega$, we denote by $W^{-1,2}(\Omega)$ the dual space to $W_{0}^{1,2}(\Omega)$, i.e. the class of the bounded linear functional on $ W_{0}^{1,2}(\Omega)$. We write $\langle~,~\rangle$ to denote the pairing between $W^{-1,2}(\Omega)$ and $W_{0}^{1,2}(\Omega)$. Moreover, we say that $g\in W^{-1,2}(\Omega)$, i.e. $g$ is a bounded linear functional on $ W_{0}^{1,2}(\Omega)$, and $\langle g,v\rangle$ satisfies
$$
|\langle g,v\rangle|\leq C\|\nabla v\|_{L^{2}(\Omega)},
$$
for any $v\in W_{0}^{1,2}(\Omega)$, where the constant $C$ depends only on dimension $n$ and $\Omega$ but independent of $v$. We also denote $g(v)=\langle g,v\rangle$.

\begin{df}\label{fbv} For a linear operator $V$ from $W^{1,2}(\Omega)$ to $W^{-1,2}(\Omega)$, if the bilinear mapping$$\langle V\cdot,\cdot\rangle:~W^{1,2}(\Omega)\times W_{0}^{1,2}(\Omega)\rightarrow \mathbb{R}$$
is bounded, i.e. there exists a constant $C$ depends only on dimension $n$ and $\Omega$ such that
\begin{equation}\label{vf1}
|\langle Vu,v\rangle|\leq C\|u\|_{W^{1,2}(\Omega)}\|\nabla v\|_{L^{2}(\Omega)}.
\end{equation} for any $u\in W^{1,2}(\Omega)$ and  $v\in W_{0}^{1,2}(\Omega)$, we call this $V$ as a bounded linear multiplier  from $W^{1,2}(\Omega)$ to $W^{-1,2}(\Omega)$. We also write $M(W^{1,2}(\Omega)\rightarrow W^{-1,2}(\Omega))$ to denote the class of the bounded linear multipliers from $W^{1,2}(\Omega)$ to $W^{-1,2}(\Omega)$.
\end{df}

Next we introduce  a class of admissible measure for $W_{0}^{1,2}(\Omega)$ which is a local version of the admissible measure for $W_0^{1,2}(\mathbb{R}^{n})$ mentioned in the introduction. We refer the readers to see \cite{MV02,MV06} for more details.
\begin{df}\label{dm12}
We say a nonnegative Borel measure $\mu$ on $\Omega$ belongs to the class of admissible measures for $W_{0}^{1,2}(\Omega)$, if $\mu$ obeys the trace inequality,
\begin{equation}\label{m1}
\displaystyle\int_{\Omega}|\varphi|^{2}d\mu\leq C\|\nabla\varphi\|_{L^{2}(\Omega)}^{2},\quad \forall~\varphi\in W_{0}^{1,2}(\Omega),
\end{equation}
where the constant $C$ only depends on $n$ and $\Omega$ but does not depend on $\varphi$. We also write $M(W_0^{1,2}(\Omega)\rightarrow L^{2}(\Omega))$ to denote the class of admissble measures from $W_0^{1,2}(\Omega)$ to $L^{2}(\Omega)$. Especially, for admissible measures $q(x)dx$ with nonnegative density $q\in L^{1}(\Omega)$, we will write $(\ref{m1})$ as
\begin{eqnarray*}
\displaystyle\int_{\Omega}|\varphi|^{2}q(x)dx\leq C\|\nabla\varphi\|_{L^{2}(\Omega)}^{2},\quad \forall~\varphi\in W_{0}^{1,2}(\Omega).
\end{eqnarray*}
\end{df}

Based on above definitions, we can define the weak solutions of $(\ref{1})$.
\begin{df}\label{ws1} Let $\Omega$ be a bounded domain and satisfy $(\bar{r},\nu)$-condition at $0\in\partial\Omega$. We assume $V\in M(W^{1,2}(\Omega_{1})\rightarrow W^{-1,2}(\Omega_{1}))$, $\vec{f}\in L^{2}(\Omega_{1})^{n}$ satisfying that $|\vec{f}|^2$ is an admissible measure for $W_{0}^{1,2}(\Omega_{1})$, and $g\in W^{-1,2}(\Omega_{1})$.
We say $u\in W^{1,2}(\Omega_{1})$ is a weak solution of $(\ref{1})$ in $\Omega_{1}$ if for all $\varphi\in W_{0}^{1,2}(\Omega_{1})$, we have
$$\displaystyle\int_{\Omega_{1}}\nabla u\cdot \nabla\varphi dx+\langle Vu,\varphi\rangle=\displaystyle\int_{\Omega_{1}}\vec{f}\cdot \nabla\varphi dx+\langle g,\varphi\rangle ,
$$
and $u$'s 0-extension is in $W^{1,2}(B_{1})$.
\end{df}

Now we give the main theorem.

\begin{thm}\label{linfty}Let $\Omega$ be a bounded domain and satisfy $(\bar{r},\nu)$-condition at $0\in\partial\Omega$ with $\bar{r}=1$. Assume that $V\in M(W^{1,2}(\Omega_{1})\rightarrow W^{-1,2}(\Omega_{1}))$, $\vec{f}\in L^{2}(\Omega_{1})^{n}$ satisfying that $|\vec{f}|^2$ is an admissible measure for $W_{0}^{1,2}(\Omega_{1})$, and $g\in W^{-1,2}(\Omega_{1})$. Suppose that  $u\in W^{1,2}(\Omega_{1})$ is a weak solution of $(\ref{1})$.
If for any $0<r\leq \displaystyle\frac{1}{2}$ and $\psi\in W^{1,2}(\Omega_{1})$, $\varphi\in W_{0}^{1,2}(\Omega_{1})$ with $\text{supp}\{\varphi\}\subset \overline{\Omega_{r}}$, $\phi\in W_{0}^{1,2}(B_{1})$ with $\text{supp}\{\phi\}\subset \overline{B_{r}}$,
\begin{eqnarray*}
& & |\langle V\psi,\varphi\rangle|\leq C_{V}\|\psi\|_{L_{r,s}^{1,2}(\Omega)}\|\nabla\varphi\|_{L^{2}(\Omega_{r})}, ~~~ \forall r<s\leq 2r,\\
& & |\langle g,\varphi\rangle|\leq \frac{\omega(r)}{r^{2}}|B_{r}|^{\frac{n+2}{2n}}\|\nabla\varphi\|_{L^{2}(\Omega_{r})},\\
& & \displaystyle\int_{\Omega_{r}}|\vec{f}|^{2}|\phi|^{2}dx\leq (\omega(r))^{2}\displaystyle\int_{B_{r}}|\nabla\phi|^{2}dx,
\end{eqnarray*}
where $C_{V}$ is a constant sufficiently small, $\|\psi\|_{L_{r,s}^{1,2}(\Omega)}=\displaystyle\frac{\|\psi\|_{L^{2}(\Omega_{s})}}{s-r}+\|\nabla \psi\|_{L^{2}(\Omega_{s})}$,  $\omega(r)$ is Dini modulus of continuity satisfying $\displaystyle\int_{0}^{1} \frac{\omega(r)}{r} d r<\infty$, then $u$ is continuous at 0 in the $L^{2}$ sense. Moreover, there exists $0<\alpha<1$ such that for any $0<r\leq 2\text{diam}(\Omega)$,
\begin{eqnarray}\label{2}
& & \left(\frac{1}{|B_{r}|}\displaystyle\int_{\Omega_{r}}|u|^{2}dx\right)^{\frac{1}{2}}\leq
CAr^{\alpha}+C
\left(r^{\alpha}\int_{r}^{1}\frac{\omega(s)}{s^{1+\alpha}}ds\right),
\end{eqnarray}
where
\begin{eqnarray*}
A=\left(\displaystyle\frac{1}{|B_{1}|}\int_{\Omega}u^{2}dx
\right)^{\frac{1}{2}}+\frac{4|B_{1}|^{\frac{n+2}{2n}}}{\delta_{0}}\left(\omega(1)
+\frac{1}{1-\lambda}\displaystyle\int_{0}^{1}\frac{\omega(s)}{s}ds\right),
\end{eqnarray*}
Here $C$ is a universal constant $C=C(n)$,  and $\lambda,\delta_{0}$ are the constants in Lemma $\ref{key1}$.
\end{thm}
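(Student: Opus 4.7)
The plan is a dyadic iteration at the boundary point $0$, modelled on the interior proof in \cite{LWZ2023} but adapted to exploit the zero boundary data together with the measure-density hypothesis $(\ref{omega})$. Introduce
\[
\Phi(r) := \left(\frac{1}{|B_{r}|}\int_{\Omega_{r}} u^{2}\,dx\right)^{1/2},
\]
so that the target estimate $(\ref{2})$ is a statement about polynomial decay of $\Phi$ with a Dini-type correction coming from $\omega$.

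The backbone of the argument is a single-scale decay, which I expect to be exactly the content of Lemma \ref{key1}: there exist universal $\lambda,\mu\in(0,1)$ and $\delta_{0}>0$ such that for every $0<r\le\tfrac12$,
\[
\Phi(\lambda r)\;\le\;\mu\,\Phi(r)\;+\;\frac{C}{\delta_{0}}\,\omega(r).
\]
To prove it I would test $(\ref{1})$ against $\eta^{2}u$ for a cutoff $\eta$ adapted to $B_{r}$, producing a Caccioppoli-type inequality on $\Omega_{r}$. Because $u$ extends by zero outside $\Omega_{1}$ and the measure-density hypothesis gives $|B_{r}\cap\Omega^{c}|\ge\nu|B_{r}|$, one has the Poincar\'e inequality $\|u\|_{L^{2}(B_{r})}\le C(\nu)\,r\,\|\nabla u\|_{L^{2}(B_{r})}$ without any regularity of $\partial\Omega$; this plays the role of the mean-subtracted Poincar\'e used in the interior case. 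The smallness of $C_{V}$ absorbs $|\langle Vu,\eta^{2}u\rangle|$ into the Dirichlet form, and the Dini hypotheses on $\vec f$ and $g$ contribute at most an $\omega(r)$-multiple of the gradient norm. A compactness/iteration argument of De Giorgi type then upgrades the resulting energy bound into the required strict contraction with $\mu<1$.

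With Lemma \ref{key1} in hand, I iterate at $r_{k}=\lambda^{k}$ to obtain
\[
\Phi(r_{k})\;\le\;\mu^{k}\,\Phi(1)\;+\;\frac{C}{\delta_{0}}\sum_{j=0}^{k-1}\mu^{k-1-j}\,\omega(r_{j}).
\]
Choose $\alpha\in(0,1)$ with $\mu=\lambda^{\alpha}$, so $\mu^{k}=r_{k}^{\alpha}$; standard Dini bookkeeping rewrites the geometric sum as a discrete Riemann sum comparable to $r_{k}^{\alpha}\int_{r_{k}}^{1}\omega(s)s^{-1-\alpha}\,ds$, which is finite under the Dini hypothesis and yields the second term in $(\ref{2})$. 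The initial factor $\Phi(1)$ is controlled by testing $(\ref{1})$ against $u$ itself: smallness of $C_{V}$ absorbs the coefficient contribution, while the Dini control of $\vec f,g$ over $(0,1)$ integrates up to the expression $\omega(1)+(1-\lambda)^{-1}\int_{0}^{1}\omega(s)/s\,ds$ appearing in $A$. Finally, for $r\in(r_{k+1},r_{k}]$, the monotone bound $\Phi(r)\le\lambda^{-n/2}\Phi(r_{k})$ converts the discrete estimate into the continuous one displayed in $(\ref{2})$.

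The principal obstacle is the one-step inequality (Lemma \ref{key1}). In the interior case one may subtract a mean value before applying Poincar\'e, but at the boundary the vanishing of $u$ on $\partial_{w}\Omega_{1}$ has to replace that mean subtraction, with the only leverage on $\partial\Omega$ being the crude measure condition $(\ref{omega})$. Coupling this with the distributional nature of $V$ — handled only through its small bilinear bound — is the delicate point; once the contraction is secured, everything that follows is a disciplined summation of a geometric series with a Dini remainder.
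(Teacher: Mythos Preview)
Your iteration scheme and the Dini bookkeeping in Steps 2--4 match the paper exactly, including the discrete-to-continuous passage via $\lambda^{k+1}<r\le\lambda^{k}$. The gap is in the one-step decay. Caccioppoli plus the measure-density Poincar\'e inequality only yields $\Phi(r/2)\le C(\nu)\,\Phi(r)+\text{error}$ with a constant $C(\nu)$ that is \emph{not} smaller than $1$; there is no mechanism in your sketch that forces the strict contraction $\mu<1$. Invoking ``De Giorgi iteration'' directly on $u$ is problematic because the level-set truncations $(u-k)_{+}$ are not admissible test functions once $V$ is merely a bilinear form on $W^{1,2}\times W_{0}^{1,2}$ --- you cannot control $\langle Vu,(u-k)_{+}\eta^{2}\rangle$ by $C_{V}$ in the required way. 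The paper's Lemma~\ref{key1} gets the contraction by a different route: first Lemma~\ref{lm4.2} uses compactness (uniform $W^{1,2}$-bounds from Caccioppoli, then weak limits) to produce a \emph{harmonic} function $p$ on $\Omega_{1/8}$ with $p=0$ on $\partial_{w}\Omega_{1/8}$ that is $\varepsilon$-close to $u$ in $L^{2}$; then the classical boundary H\"older estimate for harmonic functions under the measure-density condition (Theorem~\ref{bounho}) gives $|p(x)|\le C_{0}|x|^{\alpha_{0}}/r_{0}^{\alpha_{0}}$, and the triangle inequality converts this into $\Phi(\lambda)\le\lambda^{\alpha}$ for suitable $\lambda$ and $\alpha<\alpha_{0}$. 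The distributional coefficient is handled only at the compactness step, where its smallness makes its contribution vanish in the limit.

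A minor point: you write that $\Phi(1)$ is ``controlled by testing against $u$ itself'' and that this produces the Dini integral in $A$. That is not how $A$ arises. In the paper $A$ is simply the normalization constant: one \emph{defines} $A$ to contain $\Phi(1)$ as a summand, and the term $\omega(1)+(1-\lambda)^{-1}\int_{0}^{1}\omega(s)/s\,ds$ is included so that after dividing by $A$ the rescaled data $\vec f/A$, $g/A$ satisfy the $\delta_{0}$-smallness hypothesis of Lemma~\ref{key1} at every dyadic scale and so that $\sum_{k}T_{k}$ is bounded (Step~3). There is no separate energy estimate bounding $\Phi(1)$ by the Dini quantities.
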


With respect to the modulus of continuity, we refer the readers to \cite{K1997,K,2} for more details. Here we give some main properties as a complement.
\begin{rem}\label{remark1.3}
Any modulus of continuity $\omega(t)$ is non-decreasing, subadditive, continuous and satisfies $\omega(0)=0$.
\end{rem}

In the following, we give some remarks of Theorem $\ref{linfty}$.
\begin{rem}
From the proof of Theorem $\ref{linfty}$, actually we only need that $C_{V}$ satisfies $C_{V}\leq \displaystyle\frac{\delta_{0}}{128}$ where $\delta_{0}$ is the constant in Lemma $\ref{key1}$.
\end{rem}

\begin{rem}
Theorem $\ref{linfty}$ is a pointwise result, so if $\Omega$ satisfies $(\bar{r},\nu)$-condition at any $x_{0}\in\partial\Omega$ and $V,~\vec{f},~g$ satisfy conditions on $\Omega\cap B_{1}(x_{0})$ respectively, then $u$ is continuous at $x_{0}$ in the $L^{2}$ sense.
\end{rem}

\begin{rem}
If $\bar{r}<1$ even sufficiently small, $V,~\vec{f},~g$ satisfy conditions for $0<r\leq\displaystyle\frac{\bar{r}}{2}$, Theorem $\ref{linfty}$ still holds by scaling. We only need to consider $\tilde{u}(x)=u(\bar{r}x)$, where $x\in B_{1}\cap\tilde{\Omega}$, $\tilde{\Omega}=\{x:\bar{r}x\in\Omega\}$, $\tilde{\omega}(r)=\omega(\bar{r}r)$. It follows that there exists $0<\alpha<1$ such that for any $0<r\leq 2\text{diam}(\tilde{\Omega})$,
\begin{eqnarray*}
& & \left(\frac{1}{|B_{r}|}\displaystyle\int_{\tilde{\Omega}_{r}}|\tilde{u}|^{2}dx\right)^{\frac{1}{2}}\leq
CAr^{\alpha}+C
\left(r^{\alpha}\int_{r}^{1}\frac{\tilde{\omega}(s)}{s^{1+\alpha}}ds\right),
\end{eqnarray*}
where
\begin{eqnarray*}
A=\left(\displaystyle\frac{1}{|B_{1}|}\int_{\tilde{\Omega}}\tilde{u}^{2}dx
\right)^{\frac{1}{2}}+\frac{4|B_{1}|^{\frac{n+2}{2n}}}{\delta_{0}}\left(\tilde{\omega}(1)
+\frac{1}{1-\lambda}\displaystyle\int_{0}^{1}\frac{\tilde{\omega}(s)}{s}ds\right).
\end{eqnarray*}
Scaling back, it follows that for any $0<r\leq 2\text{diam}(\Omega)$,
\begin{eqnarray*}
\left(\frac{1}{|B_{r}|}\displaystyle\int_{\Omega_{r}}|u|^{2}dx\right)^{\frac{1}{2}}\leq
CA\left(\frac{r}{\bar{r}}\right)^{\alpha}+C
\left(r^{\alpha}\int_{r}^{\bar{r}}\frac{\omega(s)}{s^{1+\alpha}}ds\right),
\end{eqnarray*}
where
\begin{eqnarray*}
A=\left(\displaystyle\frac{1}{|B_{\bar{r}}|}\int_{\Omega}u^{2}dx
\right)^{\frac{1}{2}}+\frac{4|B_{1}|^{\frac{n+2}{2n}}}{\delta_{0}}\left(\omega(\bar{r})
+\frac{1}{1-\lambda}\displaystyle\int_{0}^{\bar{r}}\frac{\omega(s)}{s}ds\right).
\end{eqnarray*}
\end{rem}

\begin{rem}\label{ho}
Notice that, in Theorem 1.4, the integral $\displaystyle\int_{r}^{1}\frac{\omega(s)}{s^{1+\alpha}}ds $ may be divergent when $r\to 0^+$. So we only know that $u$ is continuous at $0$ in the $L^2$ sense from the decay of $L^2$ mean value of $u$ on $B_r$, see (\ref{2}).  However, once  the integral $\displaystyle\int_{r}^{1}\frac{\omega(s)}{s^{1+\alpha}}ds $ is convergent when $r\to 0^+$ for some $\omega(r)$, we must get that $u$ is $C^\alpha$  at $0$ in the $L^2$ sense. Especially, if $\omega(r)=Lr^{\alpha_{1}}$ for some $0<\alpha_{1}\leq1$ in Theorem $\ref{linfty}$, then there exists $0<\alpha<\alpha_{1}$ such that $u$ is $C^{\alpha}$ at 0 in the $L^{2}$ sense. Furthermore, for any $0<r\leq 2\text{diam}(\Omega)$,
\begin{eqnarray*}
\left(\frac{1}{|B_{r}|}\displaystyle\int_{\Omega_{r}}|u|^{2}dx\right)^{\frac{1}{2}}\leq
C\left(\|u\|_{L^{2}(\Omega)}+L\right)r^{\alpha},
\end{eqnarray*}
where $C$ depends on $n,\bar{r},\nu,C_{V},\alpha_{1}$.
\end{rem}

If $\Omega$ is a $(\mu)$-type domain and satisfies uniform satisfy $(\bar{r},\nu)$-condition at any boundary point with $\mu+\nu<1$, then combining with Theorem \ref{linfty} and interior H\"{o}lder estimate (Corollary 1.5 in \cite{LWZ2023}), we can give the following global H\"{o}lder estimate. The definition of $(\mu)$-type domain is that there exists $\mu>0$ such that for any $x\in\Omega$, $0<r\leq\text{diam}(\Omega)$, $|\Omega_{r}(x)|\geq\mu|B_{r}(x)|$. In fact, $(\mu)$-type domain guarantees that Campanato embedding theorem can be used.
\begin{thm}\label{global}
Let $\Omega$ be a $(\mu)$-type domain and satisfy uniform satisfy $(\bar{r},\nu)$-condition at any boundary point with $\mu+\nu<1$. Assume that $V\in M(W^{1,2}(\Omega)\rightarrow W^{-1,2}(\Omega))$, $\vec{f}\in L^{2}(\Omega)^{n}$ satisfying that $|\vec{f}|^2$ is an admissible measure for $W_{0}^{1,2}(\Omega)$, and $g\in W^{-1,2}(\Omega)$. Assume that there exist $C_{V}$ sufficiently small, $0<\alpha_{i}<1$, $N_{i}>0$, $i=1,2$ such that for any $x_{0}\in\Omega$, any $0<r\leq \displaystyle\frac{\text{dist}(x_{0},\partial\Omega)}{2}$ and $\psi\in W^{1,2}(\Omega)$, $\varphi\in W_{0}^{1,2}(\Omega)$ with $\text{supp}\{\varphi\}\subset \overline{B_{r}(x_{0})}$,
\begin{eqnarray*}
&&|\langle V\psi,\varphi\rangle|\leq N_{1}s^{\alpha_{1}}\|\psi(\cdot+x_{0})\|_{L_{r,s}^{1,2}}\|\nabla\varphi\|_{L^{2}(B_{r}(x_{0}))},~~~ \forall r<s\leq 2r,
\\
&&
|\langle g,\varphi\rangle|\leq \frac{N_{2}r^{\alpha_{2}}}{r^{2}}|B_{r}|^{\frac{n+2}{2n}}\|\nabla\varphi\|_{L^{2}(B_{r}(x_{0}))},
\\
&&
\displaystyle\int_{B_{r}(x_{0})}|\vec{f}|^{2}|\varphi|^{2}dx\leq N_{2}^{2}r^{2\alpha_{2}}\displaystyle\int_{B_{r}(x_{0})}|\nabla\varphi|^{2}dx,
\end{eqnarray*}
and for any $x_{0}\in\partial\Omega$, $0<r\leq \displaystyle\frac{\bar{r}}{2}$ and $\psi\in W^{1,2}(\Omega_{1}(x_{0}))$, $\varphi\in W_{0}^{1,2}(\Omega_{1}(x_{0}))$ with $\text{supp}\{\varphi\}\subset \overline{\Omega_{r}(x_{0})}$, $\phi\in W_{0}^{1,2}(B_{1}(x_{0}))$ with $\text{supp}\{\phi\}\subset \overline{B_{r}(x_{0})}$,
\begin{eqnarray*}
& & |\langle V\psi,\varphi\rangle|\leq C_{V}\|\psi\|_{L_{r,s,x_{0}}^{1,2}(\Omega)}\|\nabla\varphi\|_{L^{2}(\Omega_{r}(x_{0}))}, ~~~ \forall r<s\leq 2r,\\
& & |\langle g,\varphi\rangle|\leq \frac{N_{2}r^{\alpha_{2}}}{r^{2}}|B_{r}|^{\frac{n+2}{2n}}\|\nabla\varphi\|_{L^{2}(\Omega_{r}(x_{0}))},\\
& & \displaystyle\int_{\Omega_{r}(x_{0})}|\vec{f}|^{2}|\phi|^{2}dx\leq (N_{2}r^{\alpha_{2}})^{2}\displaystyle\int_{B_{r}(x_{0})}|\nabla\phi|^{2}dx,
\end{eqnarray*}
where $\|\psi\|_{L_{r,s,x_{0}}^{1,2}(\Omega)}=\displaystyle\frac{\|\psi\|_{L^{2}(\Omega_{s}(x_{0}))}}{s-r}+\|\nabla \psi\|_{L^{2}(\Omega_{s}(x_{0}))}$, then if $u\in W^{1,2}(\Omega)$ is a weak solution of ($\ref{1}$) in $\Omega$, there exists $\alpha<\min\{\alpha_{1},\alpha_{2}\}$ such that $u\in C^{\alpha}(\overline{\Omega})$. Furthermore,
there exists constant $C=C(n,\alpha_{1},\alpha_{2},N_{1},C_{V},\text{diam}(\Omega),\mu,\nu,\bar{r})$ such that
$$\|u\|_{C^{\alpha}(\overline{\Omega})}\leq C(\|u\|_{L^{2}(\Omega)}+N_{2}).
$$
\end{thm}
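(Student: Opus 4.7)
\textbf{Proof plan for Theorem \ref{global}.} The strategy is a standard Campanato argument adapted to our setting: I would combine the boundary Hölder decay obtained above (Theorem \ref{linfty} in the power-law case, Remark \ref{ho}) with the interior Hölder estimate from Corollary 1.5 of \cite{LWZ2023} to produce a uniform $L^2$-mean oscillation decay of $u$ at every point of $\overline{\Omega}$, then invoke the Campanato embedding, which applies because $\Omega$ is of $(\mu)$-type.

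First I would apply Remark \ref{ho} at each $x_0\in\partial\Omega$ with modulus $\omega(r)=N_2 r^{\alpha_2}$: this yields an exponent $\alpha_b\in(0,\alpha_2)$ and a constant $C_1=C_1(n,\bar{r},\nu,C_V,\alpha_2)$ such that
\[
\left(\frac{1}{|B_r|}\int_{\Omega_r(x_0)}|u|^2\,dx\right)^{1/2}\le C_1\bigl(\|u\|_{L^2(\Omega)}+N_2\bigr)\,r^{\alpha_b},\qquad 0<r\le 2\,\text{diam}(\Omega).
\]
At any interior $x_0\in\Omega$, Corollary 1.5 of \cite{LWZ2023} supplies an exponent $\alpha_i<\min\{\alpha_1,\alpha_2\}$ and, for every $0<r\le\text{dist}(x_0,\partial\Omega)/2$, a constant $c_{x_0,r}$ with
\[
\left(\frac{1}{|B_r|}\int_{B_r(x_0)}|u-c_{x_0,r}|^2\,dx\right)^{1/2}\le C_2\bigl(\|u\|_{L^2(\Omega)}+N_2\bigr)\,r^{\alpha_i}.
\]

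Setting $\alpha=\min\{\alpha_i,\alpha_b\}$, I next want to show that for every $x\in\Omega$ and every $0<r\le 2\,\text{diam}(\Omega)$ there is a constant $k_{x,r}$ with
\[
\left(\frac{1}{|\Omega_r(x)|}\int_{\Omega_r(x)}|u-k_{x,r}|^2\,dx\right)^{1/2}\le C_3\bigl(\|u\|_{L^2(\Omega)}+N_2\bigr)\,r^{\alpha}.
\]
If $r\le\text{dist}(x,\partial\Omega)/2$, I take $k_{x,r}=c_{x,r}$ and quote the interior estimate, converting $|B_r|$ to $|\Omega_r(x)|$ by the $(\mu)$-type lower bound $|\Omega_r(x)|\ge\mu|B_r|$. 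If $r>\text{dist}(x,\partial\Omega)/2$, I pick $y\in\partial\Omega$ with $|x-y|<2r$; then $\Omega_r(x)\subset\Omega_{3r}(y)$, and the choice $k_{x,r}=0$ combined with the boundary estimate at $y$ and the volume comparisons $|\Omega_r(x)|\ge\mu|B_r|$ and $|B_{3r}|=3^n|B_r|$ produces the required decay (with a slightly larger constant absorbed into $C_3$). The above uniform mean-oscillation decay on the $(\mu)$-type domain $\Omega$ is exactly the Campanato hypothesis, and the embedding $\mathcal{L}^{2,n+2\alpha}(\Omega)\hookrightarrow C^{0,\alpha}(\overline{\Omega})$ yields $u\in C^\alpha(\overline{\Omega})$ together with the stated bound, $C$ depending on $n,\alpha_1,\alpha_2,N_1,C_V,\text{diam}(\Omega),\mu,\nu,\bar{r}$.

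The main obstacle I anticipate is the intermediate regime $r\approx\text{dist}(x,\partial\Omega)$ where neither the pure interior nor the pure boundary estimate is tight; the geometric inclusion $\Omega_r(x)\subset\Omega_{3r}(y)$ at a nearest boundary point $y$ is what patches the two regimes together uniformly. The compatibility condition $\mu+\nu<1$ is used implicitly: it ensures that the interior density lower bound $\mu$ and the exterior density lower bound $\nu$ can coexist, so that Campanato's embedding and the boundary decay are simultaneously available on the same domain.
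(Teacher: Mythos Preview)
Your overall strategy---combine the interior and boundary pointwise H\"older estimates and then apply Campanato's embedding on the $(\mu)$-type domain---matches the paper's, and your handling of the near-boundary regime $r>\tfrac{1}{2}\text{dist}(x,\partial\Omega)$ via the inclusion $\Omega_r(x)\subset\Omega_{3r}(y)$ with $y\in\partial\Omega$ is essentially what the paper does. The gap is in the interior case.

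The interior estimate you attribute to Corollary~1.5 of \cite{LWZ2023} does \emph{not} produce the uniform bound
\[
\left(\frac{1}{|B_r|}\int_{B_r(x_0)}|u-c_{x_0,r}|^2\,dx\right)^{1/2}\le C_2\bigl(\|u\|_{L^2(\Omega)}+N_2\bigr)\,r^{\alpha_i}
\]
with $C_2$ independent of $x_0$. That corollary is a pointwise result applied on $B_d(x_0)$ with $d=\text{dist}(x_0,\partial\Omega)$; what it actually gives (see how the paper invokes it, ``take $R=d$'') is
\[
\left(\frac{1}{|B_r|}\int_{B_r(x_0)}|u-K_{x_0}|^2\,dx\right)^{1/2}
\le C\left[\Bigl(\frac{1}{|B_d|}\int_{B_d(x_0)}u^2\,dx\Bigr)^{1/2}+N_2\,d^{\alpha_2}\right]\Bigl(\frac{r}{d}\Bigr)^{\beta}.
\]
The averaged $L^2$ norm on $B_d(x_0)$ is \emph{not} controlled by $\|u\|_{L^2(\Omega)}$ uniformly in $d$: replacing it crudely by $|B_d|^{-1/2}\|u\|_{L^2(\Omega)}$ introduces a factor $d^{-n/2}$ that blows up as $x_0$ approaches $\partial\Omega$. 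The missing step---and this is precisely what the paper does---is to feed the \emph{boundary} decay back into this term: choosing the nearest boundary point $x_y$ one has $B_d(x_0)\subset\Omega_{2d}(x_y)$, so Remark~\ref{ho} yields
\[
\Bigl(\frac{1}{|B_d|}\int_{B_d(x_0)}u^2\,dx\Bigr)^{1/2}
\le C\Bigl(\frac{1}{|B_{2d}|}\int_{\Omega_{2d}(x_y)}u^2\,dx\Bigr)^{1/2}
\le C\bigl(\|u\|_{L^2(\Omega)}+N_2\bigr)d^{\alpha_b}.
\]
Only after inserting this, and taking $\alpha\le\min\{\alpha_b,\beta\}$ so that $d^{\alpha_b}(r/d)^{\beta}\le C\,r^{\alpha}$ for $r\le d$, do you obtain the uniform Campanato decay. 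In short, the ``intermediate regime'' you anticipated is not merely $r\approx d$; the interior estimate is non-uniform for \emph{every} $r\le d$ when $d$ is small, and must be coupled with the boundary estimate even there.
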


\begin{rem}
Actually Theorem $\ref{global}$ covers the classical global H\"{o}lder estimates given by Theorem 8.29 in \cite{DT} where $\vec{f}\in L^{q}(\Omega)^{n}$, $g\in L^{\frac{nq}{n+q}}(\Omega)$, $V\in L^{\frac{q}{2}}(\Omega)$ for some $2n>q>n$. Here we only check the conditions on $V,~\vec{f},~g$ at the boundary point. For the interior case we refer readers to Example 1.8 in \cite{LWZ2023}.

There must be one $\bar{r}$ small enough such that $\bar{r}^{2-\frac{2n}{q}}\|V\|_{L^{\frac{q}{2}}(\Omega)}$ small enough. Then for any $x_{0}\in\partial\Omega$, any $0<r\leq \displaystyle\frac{\bar{r}}{2}$ and $\psi\in W^{1,2}(\Omega_{1}(x_{0}))$, $\varphi\in W_{0}^{1,2}(\Omega_{1}(x_{0}))$ with $\text{supp}\{\varphi\}\subset \overline{\Omega_{r}(x_{0})}$, $\phi\in W_{0}^{1,2}(B_{1}(x_{0}))$ with $\text{supp}\{\phi\}\subset \overline{B_{r}(x_{0})}$, $\langle V\psi,\varphi\rangle$, $\langle g,\varphi\rangle$ can be viewed as $\displaystyle\int_{\Omega_{r}(x_{0})}V\psi\varphi dx$ and $\displaystyle\int_{\Omega_{r}(x_{0})}g\varphi dx$ respectively.
Using H\"{o}lder inequality, Sobolev inequality and Poincar\'{e} inequality it follows that
\begin{eqnarray*}
\left|\langle V\psi,\varphi\rangle\right|&=&\left|\displaystyle\int_{\Omega_{r}(x_{0})}V\psi\varphi dx\right|\\
&\leq&\|V\|_{L^{\frac{n}{2}}(\Omega_{r}(x_{0}))}\|\psi\|_{L^{\frac{2n}{n-2}}(\Omega_{r}(x_{0}))}\|\varphi\|_{L^{\frac{2n}{n-2}}(\Omega_{r}(x_{0}))}\\
&\leq&C(n,q)r^{2-\frac{2n}{q}}\|V\|_{L^{\frac{q}{2}}(\Omega_{r}(x_{0}))}\left(\frac{\|\psi\|_{L^{2}(\Omega_{r}(x_{0}))}}{r}
+\|\nabla\psi\|_{L^{2}(\Omega_{r}(x_{0}))}\right)
\|\nabla\varphi\|_{L^{2}(\Omega_{r}(x_{0}))}\\
&\leq&C(n,q)
\bar{r}^{2-\frac{2n}{q}}\|V\|_{L^{\frac{q}{2}}(\Omega)}
\|\psi\|_{L^{1,2}_{r,s,x_{0}}(\Omega)}\|\nabla\varphi\|_{L^{2}(\Omega_{r}(x_{0}))},
\end{eqnarray*}
for any $r<s\leq 2r$. Similarly,
\begin{eqnarray*}
\left|\langle g,\varphi\rangle\right|=\left|\displaystyle\int_{\Omega_{r}(x_{0})}g\varphi dx\right|
&\leq&C(n,q,\|g\|_{L^{\frac{nq}{n+q}}(\Omega)})r^{-1-\frac{n}{q}}|B_{r}|^{\frac{n+2}{2n}}\|\nabla\varphi\|_{L^{2}(\Omega_{r}(x_{0}))},
\end{eqnarray*}
\begin{eqnarray*}
\displaystyle\int_{\Omega_{r}(x_{0})}|\vec{f}|^{2}\phi^{2}dx
&\leq&C(n,q,\|\vec{f}\|_{L^{q}(\Omega)})r^{2-\frac{2n}{q}}\int_{B_{r}(x_{0})}|\nabla\phi|^{2}dx.
\end{eqnarray*}
We set $C_{V}=C(n,q)
\bar{r}^{2-\frac{2n}{q}}\|V\|_{L^{\frac{q}{2}}(\Omega)}$ sufficiently small, $\omega(r)=C(n,q,\|\vec{f}\|_{L^{q}(\Omega)},\|g\|_{L^{\frac{nq}{n+q}}(\Omega)})r^{1-\frac{n}{q}}$,  then by Theorem $\ref{global}$, it follows that there exists $0<\alpha<1-\displaystyle\frac{n}{q}$ such that $u\in C^{\alpha}(\overline{\Omega})$.
\end{rem}

\begin{rem}
It is worthy to mention that the coefficient $V$ that we consider can not only be functions, but also operators which may not satisfy $\langle Vu,v\rangle=\langle V,uv\rangle$. This is different from the coefficients that Maz'ya and Verbitsky considered in \cite{MV02,MV06}. For example, we set $V=\vec{b}\cdot\nabla$ where $\vec{b}\in L^{n}(\Omega)^{n}$, then $\langle Vu,v\rangle$ can be viewed as $\displaystyle\int_{\Omega_{1}}(\vec{b}\cdot\nabla \psi)\varphi dx$ for any $\psi\in W^{1,2}(\Omega_{1})$, $\varphi\in W_{0}^{1,2}(\Omega_{1})$ with $\text{supp}\{\varphi\}\subset \overline{\Omega_{r}}$. It is obvious that $\langle V\psi,\varphi\rangle\neq\langle V,\psi\varphi\rangle$. 
But on the other hand, by using H\"{o}lder inequality and Sobolev inequality we can calculate that
\begin{eqnarray*}
\left|\langle V\psi,\varphi\rangle\right|&=&\left|\displaystyle\int_{\Omega_{r}}(\vec{b}\cdot\nabla\psi)\varphi dx\right|\\
&\leq&\|\vec{b}\|_{L^{n}(\Omega_{r})}\|\nabla\psi\|_{L^{2}(\Omega_{r})}\|\varphi\|_{L^{\frac{2n}{n-2}}(\Omega_{r})}\\
&\leq&C(n,q)\|\vec{b}\|_{L^{n}(\Omega_{r})}\|\nabla\psi\|_{L^{2}(\Omega_{r})}
\|\nabla\varphi\|_{L^{2}(\Omega_{r})}\\
&\leq&C(n,q)\|\vec{b}\|_{L^{n}(\Omega_{r})}
\|\psi\|_{L^{1,2}_{r,s}(\Omega)}\|\nabla\varphi\|_{L^{2}(\Omega_{r})}.
\end{eqnarray*}
The above inequality implies that $\vec{b}\cdot\nabla\in M(W^{1,2}(\Omega_{1})\rightarrow W^{-1,2}(\Omega_{1}))$ and satisfies the assumption in Theorem $\ref{linfty}$ with $C_{V}=C(n,q)\|\vec{b}\|_{L^{n}(\Omega_{\bar{r}})}$ for $0<r\leq \bar{r}$, $\bar{r}$ sufficiently small.
\end{rem}

The method to prove Theorem $\ref{linfty}$ is similar to the interior case in \cite{LWZ2023}. We still use the compactness method inspired by \cite{BW04,W92} and iteration technique which can be tracked back to \cite{Ca,Ca1989}. We will find a harmonic function $p$ defined in $\Omega_{1}$ with zero boundary value on $\partial_{w}\Omega_{1}$ to approximate $u$, then approximate $u$ by $p(0)$ in the $L^{2}$ sense, the boundary H\"{o}lder estimate will be used in this step. Since $u(0)=p(0)=0$, then we only need to consider the change of $\|u\|_{L^{2}}$ in different scales. It is the main difference from the proof of the interior case. It also illustrates that why we do not need to make Dini decay assumptions on $V$ here: because $u(0)V\equiv0$, then the terms like a constant plus $V$ will not appear on the right hand of the equations during iteration process. Finally the convergence of sum of errors from different scales will lead to the continuity of solution at $0$. We organize the remaining sections as follows. In Section 2, an energy estimate will be given, then we prove an approximation lemma by compactness method and a key lemma that will be used repeatedly in Section 3. The proof of Theorem $\ref{linfty}$ and Theorem $\ref{global}$ will be showed in Section 3.

\section{Energy estimate and compactness lemma}
In this section we will give some preliminary lemmas to prove Theorem $\ref{linfty}$ including energy estimates, and an approximation lemma. We assume in the sequel that $V\in M(W^{1,2}(\Omega_{1}),W^{-1,2}(\Omega_{1}))$, $\vec{f}\in L^{2}(\Omega_{1})^{n}$ satisfying that $|\vec{f}|^2$ is an admissible measure for $W_{0}^{1,2}(\Omega_{1})$, and $g\in W^{-1,2}(\Omega_{1})$. The following lemma will be used in the energy estimate which can be found in Chapter 4 of \cite{HL2011}.
\begin{lm}\label{pee}
Let $h(t)\geq0$ be bounded in $[\tau_{0},\tau_{1}]$ with $\tau_{0}\geq0$. Suppose for any $\tau_{0}\leq t<s \leq\tau_{1}$,
$$h(t)\leq \theta h(s)+\frac{A}{(s-t)^{\alpha}}+B,
$$
for some $\theta\in[0,1)$ and some $A,B\geq0$. Then for any $\tau_{0}\leq t<s \leq\tau_{1}$,
$$h(t)\leq C\left(\frac{A}{(s-t)^{\alpha}}+B\right),
$$
where $C$ is a positive constant depending only on $\alpha$ and $\theta$.
\end{lm}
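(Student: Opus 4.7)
The plan is to prove Lemma \ref{pee} by the classical iteration trick: fix any $\tau_{0}\leq t<s\leq\tau_{1}$, construct a monotone sequence $t_{0}=t<t_{1}<t_{2}<\cdots$ converging to $s$, feed the pair $(t_{i},t_{i+1})$ into the hypothesis, iterate, and use boundedness of $h$ to kill the remainder term. The only real choice is the geometric ratio: it must be tuned so that the factor $\theta$ from the hypothesis beats the growth coming from the $(s-t)^{-\alpha}$ weight.

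Concretely, I would pick $\tau\in(\theta^{1/\alpha},1)$ (possible since $\theta<1$) and set
\[
t_{0}=t,\qquad t_{i+1}=t_{i}+(1-\tau)\tau^{i}(s-t).
\]
Since $\sum_{i\geq 0}(1-\tau)\tau^{i}=1$, the sequence stays in $[t,s]$ and $t_{i}\to s$. Moreover $t_{i+1}-t_{i}=(1-\tau)\tau^{i}(s-t)$. Applying the hypothesis to $(t_{i},t_{i+1})$ gives
\[
h(t_{i})\leq \theta\, h(t_{i+1})+\frac{A}{(1-\tau)^{\alpha}\tau^{i\alpha}(s-t)^{\alpha}}+B.
\]
Iterating this $k$ times yields
\[
h(t)\leq \theta^{k}h(t_{k})+\frac{A}{(1-\tau)^{\alpha}(s-t)^{\alpha}}\sum_{i=0}^{k-1}\bigl(\theta\tau^{-\alpha}\bigr)^{i}+B\sum_{i=0}^{k-1}\theta^{i}.
\]

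Now I would let $k\to\infty$. Because $h$ is bounded on $[\tau_{0},\tau_{1}]$ and $\theta\in[0,1)$, the term $\theta^{k}h(t_{k})$ tends to $0$. By the choice of $\tau$, the ratio $\theta\tau^{-\alpha}<1$, so both geometric series converge and are bounded by quantities depending only on $\alpha$ and $\theta$. Collecting these constants into a single $C=C(\alpha,\theta)$ produces the claimed bound
\[
h(t)\leq C\!\left(\frac{A}{(s-t)^{\alpha}}+B\right).
\]

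The only subtlety — and the one place where the boundedness hypothesis on $h$ is actually used — is justifying $\theta^{k}h(t_{k})\to 0$; without boundedness one could not discard the tail, which is why it appears in the statement. Everything else is routine summation of geometric series, and the choice $\tau\in(\theta^{1/\alpha},1)$ is the single tuning step that makes the argument work. This is essentially the proof in Han–Lin (cited as \cite{HL2011}), reproduced here for completeness.
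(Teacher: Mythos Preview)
Your argument is correct and is exactly the standard iteration proof from Han--Lin \cite{HL2011}. The paper does not supply its own proof of this lemma but simply cites that reference, so your write-up matches the intended source verbatim in spirit and detail.
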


Next we recall classical boundary H\"{o}lder regularity of solutions, see Theorem 8.27 in \cite{DT}.
\begin{thm}\label{bounho}
Suppose that $\vec{f}\in L^{q}(\Omega)^{n}$, $V,~g\in L^{\frac{q}{2}}(\Omega)$ for some $q>n$. Then if $u$ is a $W^{1,2}(\Omega)$ solution of $(\ref{1})$ in $\Omega$ and $\Omega$ satisfies $\nu$-condition at a point $x_{0}\in\partial\Omega$, then there exists constant $r_{0}>0$ such that for any $0<r\leq r_{0}$,
$$\underset{\Omega\cap B_{r}(x_{0})}{\emph{osc}}u\leq Cr^{\alpha}\left(r_{0}^{-\alpha}\sup_{\Omega\cap B_{r_{0}}(x_{0})}|u|+\|\vec{f}\|_{L^{q}(\Omega)}+\|g\|_{L^{\frac{q}{2}}(\Omega)}\right),
$$
where $C=C(n,q,\|V\|_{L^{\frac{q}{2}}(\Omega)})$, $\alpha=\alpha(n,q,\|V\|_{L^{\frac{q}{2}}(\Omega)})$ are positive constants.
\end{thm}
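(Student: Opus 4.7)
The plan is to reduce the Hölder estimate to a quantitative oscillation decay at the boundary point $x_{0}$, which is then iterated in the standard dyadic way. Without loss of generality take $x_{0}=0$, and set $M(r)=\sup_{\Omega_{r}}u$, $m(r)=\inf_{\Omega_{r}}u$, $\omega(r)=M(r)-m(r)$. Since the statement is borrowed from divergence-form De~Giorgi--Nash--Moser theory with lower-order terms, the engine is a weak Harnack / $L^{\infty}$-estimate for sub- and supersolutions, upgraded to the boundary by exploiting the $\nu$-condition and the zero Dirichlet data.

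First I would establish the Caccioppoli inequality on $\Omega_{r}$ by testing the equation against $\eta^{2}(u-k)_{\pm}$ for a standard cutoff $\eta$. The term $\langle Vu,\eta^{2}(u-k)_{\pm}\rangle$ is handled by Hölder's inequality with exponents $(\tfrac{q}{2},\tfrac{n}{n-2}\cdot\tfrac{n}{n-2})$ and the Sobolev embedding $W^{1,2}\hookrightarrow L^{2n/(n-2)}$, giving a factor $r^{2-2n/q}$ which is absorbable for $r$ small; likewise $\vec{f}$ and $g$ contribute inhomogeneous tails $\|\vec{f}\|_{L^{q}}r^{\alpha_{0}}$ and $\|g\|_{L^{q/2}}r^{\alpha_{0}}$ with $\alpha_{0}=1-n/q>0$. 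A Moser iteration then yields local boundedness of $u$ and of $(u-k)_{\pm}$ near $0$.

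The crucial step is the boundary improvement of oscillation. I would consider $w=M(r)-u$ (nonnegative supersolution of a modified equation) and extend it by $M(r)\geq 0$ across $\partial_{w}\Omega$, using that $u=0$ there, so that $w\geq M(r)$ on $B_{r}\setminus\Omega$, a set of measure $\geq\nu|B_{r}|$ by the $(\bar{r},\nu)$-condition. Applying the weak Harnack inequality for nonnegative supersolutions to this extension on $B_{r}\subset\mathbb{R}^{n}$, one obtains
\[
\inf_{\Omega_{r/2}}w\;\geq\;c\Bigl(\frac{1}{|B_{r}|}\int_{B_{r}}w^{p}\Bigr)^{1/p}-Cr^{\alpha_{0}}\!\bigl(\|\vec{f}\|_{L^{q}}+\|g\|_{L^{q/2}}\bigr)\;\geq\;c\nu^{1/p}M(r)-Cr^{\alpha_{0}}F,
\]
with $F:=\|\vec{f}\|_{L^{q}}+\|g\|_{L^{q/2}}$. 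Running the same argument for $u-m(r)$ and subtracting gives the one-step decay
\[
\omega(r/2)\;\leq\;\theta\,\omega(r)+Cr^{\alpha_{0}}F,\qquad \theta=\theta(n,q,\nu,\|V\|_{L^{q/2}})<1.
\]
The standard geometric iteration lemma (e.g.\ Lemma~8.23 of \cite{DT}) then converts this to $\omega(r)\leq Cr^{\alpha}\bigl(r_{0}^{-\alpha}\sup_{\Omega_{r_{0}}}|u|+F\bigr)$ for some $\alpha=\alpha(n,q,\nu,\|V\|_{L^{q/2}})\in(0,\alpha_{0})$, which is the desired estimate.

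The main obstacle is the weak Harnack for supersolutions of $-\Delta w+Vw=\text{l.o.t.}$ when $V$ is only $L^{q/2}$ and possibly of no fixed sign: the usual proof for $V\geq 0$ goes through verbatim, but for general $V$ one must split $V=V_{+}-V_{-}$, absorb the $V_{-}$ contribution via the smallness $r^{2-2n/q}\|V_{-}\|_{L^{q/2}}$, and then verify that $\theta$ genuinely stays below $1$ uniformly for $r\leq r_{0}$ once $r_{0}$ is chosen small in terms of $\|V\|_{L^{q/2}}$. The remaining details are the careful bookkeeping of the nonhomogeneous right-hand sides through the Moser iteration so that the final constant depends on $F$ in the linear fashion asserted.
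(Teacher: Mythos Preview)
Your sketch is the standard De~Giorgi--Moser route to boundary H\"older regularity and is essentially correct in outline: Caccioppoli with absorbable $V$-term for $r\le r_0$ small, weak Harnack for the nonnegative supersolutions $M(r)-u$ and $u-m(r)$ extended across $\partial_w\Omega$ using the $\nu$-condition, one-step oscillation decay, and the iteration lemma. This is exactly the machinery behind Theorem~8.27 of \cite{DT}.

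However, you should be aware that the paper does \emph{not} prove this statement at all. Theorem~\ref{bounho} is introduced with the words ``Next we recall classical boundary H\"{o}lder regularity of solutions, see Theorem 8.27 in \cite{DT}'' and is used as a black box in the proof of the Key Lemma~\ref{key1}. So there is no ``paper's own proof'' to compare against; your proposal is a reconstruction of the cited classical result rather than of anything the authors supply. If your goal was to fill in what the paper omits, the sketch is on the right track; the only point to tighten is the passage from the weak Harnack lower bound on $M(r)-u$ and $u-m(r)$ separately to the combined inequality $\omega(r/2)\le\theta\,\omega(r)+Cr^{\alpha_0}F$: one must add the two estimates and use $M(r)+(-m(r))\ge\omega(r)$ together with the fact that at least one of $M(r)$, $-m(r)$ is $\ge\omega(r)/2$, since $u=0$ on $\partial_w\Omega$ forces $M(r)\ge 0\ge m(r)$.
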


To begin with, we prove the following energy estimate which will be used in the proof of Lemma $\ref{lm4.2}$.
\begin{lm}[Energy~estimate]\label{ee}Let $\Omega$ be a bounded domain and  $u\in W^{1,2}(\Omega_{1})$ is a weak solution of $(\ref{1})$ in $\Omega_{1}$. Then there exists a sufficiently small positive constant $\varepsilon_{0}$, if
\begin{equation}\label{v1}
|\langle V\psi,\varphi\rangle|\leq \varepsilon_{0}\|\psi\|_{L_{\rho,\theta}^{1,2}(\Omega)}\|\nabla\varphi\|_{L^{2}(\Omega_{\rho})}, ~~~\forall \rho<\theta\leq2\rho,
\end{equation}
\begin{equation}\label{g1}
|\langle g,\varphi\rangle|
\leq \varepsilon_{0}\|\nabla\varphi\|_{L^{2}(\Omega_{\rho})},
\end{equation}
and
\begin{equation}\label{h22}
\displaystyle\int_{\Omega_{\rho}}|\vec{f}|^{2}|\phi|^{2}dx\leq \varepsilon_{0}\displaystyle\int_{B_{\rho}}|\nabla\phi|^{2}dx
\end{equation} for any $0<\rho\leq\displaystyle\frac{1}{2}$, $\psi\in W^{1,2}(\Omega_{1}),~\varphi\in W_{0}^{1,2}(\Omega_{1})$ with $\text{supp}\{\varphi\}\subset \overline{\Omega_{\rho}}$, $\phi\in W_{0}^{1,2}(B_{1})$ with $\text{supp}\{\phi\}\subset \overline{B_{\rho}}$,
we have
$$\displaystyle\int_{\Omega_{t}}|\nabla u|^{2}dx\leq C\left(\displaystyle\int_{\Omega_{1}}|u|^{2}dx\right)\frac{1}{(s-t)^{2}}+1
$$for any $0<t<s\leq \displaystyle\frac{1}{2}$,
where $C$ is a positive constant depending only on $n$ and $\varepsilon_{0}$.
\end{lm}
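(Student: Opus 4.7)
The plan is to run a Caccioppoli-type energy estimate using the smallness conditions $(\ref{v1})$--$(\ref{h22})$, producing an inequality of the form $h(t)\leq \mu h(s)+A/(s-t)^2+B$ with $\mu<1$, and then removing the $h(s):=\int_{\Omega_s}|\nabla u|^2\,dx$ term on the right via the iteration Lemma $\ref{pee}$ on $[0,1/2]$. Fix $0<t<s\leq 1/2$, set $\rho:=(t+s)/2$, and pick a cutoff $\eta\in C_{c}^{\infty}(B_{\rho})$ with $\eta\equiv 1$ on $B_{t}$ and $|\nabla\eta|\leq 4/(s-t)$. The natural test function is $\varphi=u\eta^{2}$, where $u$ is interpreted via its zero extension to $B_{1}$; because $\mathrm{supp}\,(u\eta^{2})\subset\overline{\Omega_{\rho}}$ is compactly contained in $B_{1}$ and $u$'s $0$-extension lies in $W^{1,2}(B_{1})$, one checks that $\varphi\in W_{0}^{1,2}(\Omega_{1})$, so $\varphi$ is a legitimate test function in the weak formulation.

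The crucial technical choice is the shrunk support radius $\rho=(t+s)/2$ rather than the natural $\rho=s$. With this choice, one may apply the $V$-bound $(\ref{v1})$ with $\rho=(t+s)/2$ and $\theta=s$: both $\rho<\theta$ (since $t<s$) and $\theta\leq 2\rho=t+s$ (since $t\geq 0$) are satisfied, and $\theta-\rho=(s-t)/2$. Consequently the right-hand side of $(\ref{v1})$ only ever involves $\|u\|_{L^{2}(\Omega_{s})}$ and $\|\nabla u\|_{L^{2}(\Omega_{s})}$, so the resulting inequality stays inside the interval on which the iteration is run. This is the main obstacle: a naive choice $\text{supp}\,\eta\subset\overline{B_{s}}$ forces $\rho=s$ and $\theta\in(s,2s]$, which drags $\|\nabla u\|_{L^{2}(\Omega_{2s})}$ onto the right-hand side and destroys the iteration step when $s$ is close to $1/2$.

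The standard identity $\int\nabla u\cdot\nabla(u\eta^{2})\,dx\geq \tfrac12\int\eta^{2}|\nabla u|^{2}\,dx-2\int u^{2}|\nabla\eta|^{2}\,dx$ combined with the weak formulation gives
\[
\tfrac12\int\eta^{2}|\nabla u|^{2}\,dx \leq \Big|\int\vec{f}\cdot\nabla(u\eta^{2})\,dx\Big|+|\langle Vu,u\eta^{2}\rangle|+|\langle g,u\eta^{2}\rangle|+2\int u^{2}|\nabla\eta|^{2}\,dx.
\]
I estimate each pairing as follows: for the $g$-term, $(\ref{g1})$ with support radius $\rho$ and Young's inequality yield a contribution bounded by $\tfrac{1}{16}h(s)+C\|u\|_{L^{2}(\Omega_{1})}^{2}/(s-t)^{2}+C\varepsilon_{0}^{2}$; for the $\vec{f}$-term, splitting $\nabla(u\eta^{2})=\eta^{2}\nabla u+2u\eta\nabla\eta$ and applying $(\ref{h22})$ twice (once to $\phi=\eta$ and once to $\phi=u\eta$, both of which extend by zero into $W_{0}^{1,2}(B_{1})$ with support in $\overline{B_{\rho}}$) produces a bound of the form $\tfrac{1}{16}h(s)+C\varepsilon_{0}h(s)+(C\varepsilon_{0}\|u\|_{L^{2}(\Omega_{1})}^{2}+C)/(s-t)^{2}$; for the $V$-term, $(\ref{v1})$ with my choice of $\rho,\theta$ gives $\varepsilon_{0}\bigl(\tfrac{2\|u\|_{L^{2}(\Omega_{s})}}{s-t}+h(s)^{1/2}\bigr)\bigl(h(s)^{1/2}+\tfrac{C\|u\|_{L^{2}(\Omega_{1})}}{s-t}\bigr)$, and two applications of $2ab\leq a^{2}+b^{2}$ turn this into $2\varepsilon_{0}h(s)+C\varepsilon_{0}\|u\|_{L^{2}(\Omega_{1})}^{2}/(s-t)^{2}$.

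Collecting all of these, together with $4\int u^{2}|\nabla\eta|^{2}\,dx\leq C\|u\|_{L^{2}(\Omega_{1})}^{2}/(s-t)^{2}$, yields
\[
h(t) \leq \bigl(\tfrac14+C\varepsilon_{0}\bigr)h(s)+\frac{C\,\|u\|_{L^{2}(\Omega_{1})}^{2}+C}{(s-t)^{2}}+C\varepsilon_{0}^{2}, \qquad 0<t<s\leq \tfrac12,
\]
for a dimensional constant $C$. Choosing $\varepsilon_{0}$ so small that $\tfrac14+C\varepsilon_{0}<1$, and noting that $h$ is bounded on $[0,1/2]$ (because $u\in W^{1,2}(\Omega_{1})$), Lemma $\ref{pee}$ with $\tau_{0}=0,\tau_{1}=\tfrac12,\alpha=2$ then gives $h(t)\leq C\|u\|_{L^{2}(\Omega_{1})}^{2}/(s-t)^{2}+C$ where $C$ depends only on $n$ and $\varepsilon_{0}$, which is the asserted estimate once the additive constant is renamed "$+1$".
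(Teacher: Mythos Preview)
Your proposal is correct and follows essentially the same route as the paper: choose a cutoff supported in $B_{(t+s)/2}$ so that $(\ref{v1})$ applies with $\rho=(t+s)/2$, $\theta=s$, test with $u\eta^{2}$, absorb each term using the smallness hypotheses to reach $h(t)\leq \mu\,h(s)+A/(s-t)^{2}+B$ with $\mu<1$, and then invoke Lemma~\ref{pee}. The only cosmetic difference is that the paper estimates the $\vec f$-term using $(\ref{h22})$ with $\phi=\eta$ alone (via Cauchy with a parameter $\tau$), whereas you also invoke $(\ref{h22})$ with $\phi=u\eta$; both are valid and lead to the same shape of inequality.
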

\begin{proof}
First we take $\eta\in C_{0}^{\infty}(B_{1})$ with $0\leq\eta\leq1$ in $B_{1}$, $\eta=1$ in $B_{t}$, $\eta=0$ in $B_{1}\backslash B_{\frac{t+s}{2}}$, $|\nabla\eta|\leq\displaystyle\frac{c_{0}}{s-t}$.
Note that $\eta^{2}u\in W_{0}^{1,2}(\Omega_{1})$ and its 0-extension is in $W^{1,2}(B_{1})$, then by Definition $\ref{ws1}$, we have
$$\displaystyle\int_{\Omega_{1}}\nabla u\cdot \nabla(\eta^{2}u)dx+\langle Vu,\eta^{2}u\rangle=\displaystyle\int_{\Omega_{1}}\vec{f}\cdot \nabla(\eta^{2}u)dx+\langle g,\eta^{2}u\rangle.
$$
We rewrite the above expression as
$$I_{1}=I_{2}+I_{3}+I_{4}+I_{5},
$$
where
$$I_{1}=\displaystyle\int_{\Omega_{1}}\eta^{2}\nabla u\cdot \nabla udx=\displaystyle\int_{B_{1}}\eta^{2}|\nabla u|^{2}dx,
$$
$$I_{2}=\displaystyle\int_{\Omega_{1}}(2u\eta\vec{f}\cdot \nabla\eta+\eta^{2}\vec{f}\cdot\nabla u)dx,
$$
$$I_{3}=\langle g,\eta^{2}u\rangle,
$$
$$I_{4}=-\displaystyle\int_{\Omega_{1}}2u\eta\nabla u\cdot \nabla \eta dx,
$$
$$I_{5}=-\langle Vu,\eta^{2}u\rangle.
$$
Since $\text{supp}\{\eta\}\subset \overline{B_{\frac{t+s}{2}}}\subset \overline{B_{s}}$, then by using the Cauchy inequality with $\tau<1$ and the assumption $(\ref{v1}),(\ref{g1}),(\ref{h22})$,  we obtain
\begin{eqnarray*}
|I_{2}| &\leq&\displaystyle\int_{\Omega_{1}}(2|\eta\vec{f}||u\nabla\eta|+\eta^{2}|\vec{f}||\nabla u|)dx\\
&\leq&\displaystyle\int_{\Omega_{1}}\left(\eta^{2}|\vec{f}|^{2}+|u|^{2}|\nabla \eta|^{2}+\tau\eta^{2}|\nabla u|^{2}+\frac{1}{4\tau}\eta^{2}|\vec{f}|^{2}\right)dx\\
&\leq&\left(1+\frac{1}{4\tau}\right)\displaystyle\int_{\Omega_{1}}|\vec{f}|^{2}|\eta|^{2}dx+
\displaystyle\int_{\Omega_{1}}|u|^{2}|\nabla \eta|^{2}dx
+\tau\displaystyle\int_{\Omega_{1}}\eta^{2}|\nabla u|^{2}dx\\
&\leq&\left(1+\frac{1}{4\tau}\right)\varepsilon_{0}\displaystyle\int_{B_{1}}|\nabla\eta|^{2}dx+
\tau\displaystyle\int_{\Omega_{s}}|\nabla u|^{2}dx
+\left(\int_{\Omega_{1}}|u|^{2}dx\right)\frac{c_{0}^{2}}{(s-t)^{2}}\\
&\leq&\left(1+\frac{1}{4\tau}\right)\frac{\varepsilon_{0}c_{0}^{2}|B_{1}|}{(s-t)^{2}}+
\tau\displaystyle\int_{\Omega_{s}}|\nabla u|^{2}dx
+\left(\int_{\Omega_{1}}|u|^{2}dx\right)\frac{c_{0}^{2}}{(s-t)^{2}},
\end{eqnarray*}

\begin{eqnarray*}
|I_{3}|&=&|\langle g,\eta^{2}u\rangle|\\
&\leq&\varepsilon_{0}\|\nabla(\eta^{2}u)\|_{L^{2}(\Omega_{s})}\\
&\leq&\varepsilon_{0}\|\eta^{2}\nabla u+2\eta u\nabla\eta\|_{L^{2}(\Omega_{1})}\\
&\leq&\varepsilon_{0}\left(2\displaystyle\int_{\Omega_{1}}\eta^{4}|\nabla u|^{2}dx+8\displaystyle\int_{\Omega_{1}}\eta^{2}|u|^{2}|\nabla \eta|^{2}dx\right)^{\frac{1}{2}}\\
&\leq&\varepsilon_{0}\left(2\displaystyle\int_{\Omega_{1}}\eta^{2}|\nabla u|^{2}dx+8\displaystyle\int_{\Omega_{1}}|u|^{2}|\nabla \eta|^{2}dx\right)^{\frac{1}{2}}\\
&\leq&\frac{\varepsilon_{0}}{2}\left(1+2\displaystyle\int_{\Omega_{1}}\eta^{2}|\nabla u|^{2}dx+8\displaystyle\int_{\Omega_{1}}|u|^{2}|\nabla \eta|^{2}dx\right)\\
&\leq&\varepsilon_{0}\displaystyle\int_{\Omega_{s}}|\nabla u|^{2}dx+\left(4\varepsilon_{0}\int_{\Omega_{1}}|u|^{2}dx\right)\frac{c_{0}^{2}}{(s-t)^{2}}+\frac{\varepsilon_{0}}{2},
\end{eqnarray*}

\begin{eqnarray*}
|I_{4}| &\leq& \displaystyle\int_{\Omega_{1}}2|u\nabla \eta||\eta\nabla u|dx \leq \frac{1}{\tau}\displaystyle\int_{\Omega_{1}}|u|^{2}|\nabla \eta|^{2}dx
+\tau\displaystyle\int_{\Omega_{1}}\eta^{2}|\nabla u|^{2}dx\\
&\leq&\tau\displaystyle\int_{\Omega_{s}}|\nabla u|^{2}dx+\left(\frac{1}{\tau}\displaystyle\int_{\Omega_{1}}|u|^{2}dx\right)\frac{c_{0}^{2}}{(s-t)^{2}},
\end{eqnarray*}
and
\begin{eqnarray*}
|I_{5}|=|\langle Vu,\eta^{2}u\rangle|&\leq& \varepsilon_{0}\|u\|_{L_{\frac{t+s}{2},s}^{1,2}(\Omega)}\|\nabla(\eta^{2} u)\|_{L^{2}(\Omega_{\frac{t+s}{2}})}\\
&\leq&\frac{\varepsilon_{0}}{2}\left(\|u\|_{L_{\frac{t+s}{2},s}^{1,2}(\Omega)}^{2}+\|\nabla(\eta^{2} u)\|_{L^{2}(\Omega_{1})}^{2}\right)\\
&\leq&\frac{\varepsilon_{0}}{2}\left(2\int_{\Omega_{s}}(\frac{4u^{2}}{(s-t)^{2}}+|\nabla u|^{2})dx+2\int_{\Omega_{1}}(\eta^{4}|\nabla u|^{2}+4\eta^{2}u^{2}|\nabla\eta|^{2})dx\right)\\
&\leq&\frac{\varepsilon_{0}}{2}\left(4\int_{\Omega_{s}}|\nabla u|^{2}dx+2\int_{\Omega_{1}}(\frac{4u^{2}}{(s-t)^{2}}+4u^{2}|\nabla\eta|^{2})dx\right)\\
&\leq&2\varepsilon_{0}\int_{\Omega_{s}}|\nabla u|^{2}dx+\left(4\varepsilon_{0}(c_{0}^{2}+1)\displaystyle\int_{\Omega_{1}}|u|^{2}dx\right)\frac{1}{(s-t)^{2}}.
\end{eqnarray*}
Now we combine the estimates $I_{i}(i=1,2,3,4,5)$ to yield that
\begin{eqnarray*}
\displaystyle\int_{\Omega_{t}}|\nabla u|^{2}dx &\leq& \displaystyle\int_{\Omega_{1}}\eta^{2}|\nabla u|^{2}dx = I_{1}\\
&\leq&(3\varepsilon_{0}+2\tau)\int_{\Omega_{s}}|\nabla u|^{2}dx
+\left(((1+\frac{1}{\tau}+8\varepsilon_{0})c_{0}^{2}+4\varepsilon_{0})\displaystyle\int_{B_{1}}|u|^{2}dx\right)\frac{1}{(s-t)^{2}}\\
&&+\left(1+\frac{1}{4\tau}\right)\varepsilon_{0}c_{0}^{2}|B_{1}|
\frac{1}{(s-t)^{2}}+\frac{\varepsilon_{0}}{2}.
\end{eqnarray*}
We choose $\epsilon_0$ and $\tau$ small enough such that $3\varepsilon_{0}+2\tau\leq\displaystyle\frac{1}{2}$, then by Lemma $\ref{pee}$, it follows that $0< t<s \leq\displaystyle\frac{1}{2}$,
\begin{eqnarray*}\displaystyle\int_{\Omega_{t}}|\nabla u|^{2}dx&\leq& C\left(\left(((1+\frac{1}{\tau}+8\varepsilon_{0})c_{0}^{2}+4\varepsilon_{0})\displaystyle\int_{\Omega_{1}}|u|^{2}dx
+\left(1+\frac{1}{4\tau}\right)\varepsilon_{0}c_{0}^{2}|B_{1}|\right)
\frac{1}{(s-t)^{2}}+\frac{\varepsilon_{0}}{2}\right)\\
&\leq&C\left(\displaystyle\int_{\Omega_{1}}|u|^{2}dx+1\right)\frac{1}{(s-t)^{2}}+1,
\end{eqnarray*}
for a positive constant $C$ depending only on $n$ and $\varepsilon_{0}$.
\end{proof}

Next we show the following approximation lemma by the compactness method.
\begin{lm}\label{lm4.2} Assume that $\Omega$ is a bounded domain satisfying $(\bar{r},\nu)$-condition at $0\in\partial\Omega$ with $\bar{r}=1$. For any $\varepsilon>0$, there exists a small $\delta=\delta(\varepsilon)>0$ such that for any weak solution of $(\ref{1})$ in $\Omega_{1}$ with $\displaystyle\frac{1}{|B_{1}|}\displaystyle\int_{\Omega_{1}}u^{2}dx\leq 1$,  and for any $0<\rho\leq\displaystyle\frac{1}{2}$, $\psi\in W^{1,2}(\Omega_{1}),~\varphi\in W_{0}^{1,2}(\Omega_{1})$ with $\text{supp}\{\varphi\}\subset \overline{\Omega_{\rho}}$, $\phi\in W_{0}^{1,2}(B_{1})$ with $\text{supp}\{\phi\}\subset \overline{B_{\rho}}$,
$$|\langle V\psi,\varphi\rangle|\leq \delta \|\psi\|_{L_{\rho,\theta}^{1,2}(\Omega)}\|\nabla\varphi\|_{L^{2}(\Omega_{\rho})},~~~\forall \rho<\theta\leq2\rho,$$
$$|\langle g,\varphi\rangle|\leq \delta\|\nabla\varphi\|_{L^{2}(\Omega_{\rho})},$$
$$\displaystyle\int_{\Omega_{\rho}}|\vec{f}|^{2}\phi^{2}dx\leq\delta^{2}\displaystyle\int_{B_{\rho}}|\nabla \phi|^{2}dx,
$$
there exists a function $p(x)$ defined in $B_{\frac{1}{8}}$, which is a solution of
\begin{eqnarray*}
\left\{
\begin{array}{rcll}
-\Delta p&=&0\qquad&\text{in}~~\Omega_{\frac{1}{8}},\\
p&=&0\qquad&\text{on}~~\partial_{w}\Omega_{\frac{1}{8}}, \\
\end{array}
\right.
\end{eqnarray*}
such that
$$\int_{\Omega_{\frac{1}{8}}}|u-p|^{2}dx\leq\varepsilon^{2}.
$$
\end{lm}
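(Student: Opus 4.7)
The plan is to argue by contradiction using the standard compactness scheme, producing the harmonic approximant $p$ as a weak $W^{1,2}$-limit point of a counterexample sequence. First I would suppose the conclusion fails: there exist $\varepsilon_{0}>0$ and sequences $\{u_{k}\}$, $\{V_{k}\}$, $\{\vec f_{k}\}$, $\{g_{k}\}$ satisfying all hypotheses with $\delta=1/k$ and $\frac{1}{|B_{1}|}\int_{\Omega_{1}}u_{k}^{2}\,dx\le 1$, yet $\int_{\Omega_{1/8}}|u_{k}-p|^{2}\,dx>\varepsilon_{0}^{2}$ for every admissible harmonic competitor $p$.

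For $k$ large enough $1/k\le\varepsilon_{0}$ (the constant from Lemma~\ref{ee}), so the energy estimate applied on $t=1/4$, $s=1/2$ furnishes a uniform bound $\int_{\Omega_{1/4}}|\nabla u_{k}|^{2}\,dx\le C$. By Definition~\ref{ws1} each zero-extension $\tilde u_{k}$ belongs to $W^{1,2}(B_{1})$, and the preceding bound together with the $L^{2}$ normalization yields a uniform estimate $\|\tilde u_{k}\|_{W^{1,2}(B_{1/4})}\le C$. Passing to a subsequence, Rellich--Kondrachov gives $\tilde u_{k}\rightharpoonup u_{0}$ weakly in $W^{1,2}(B_{1/4})$ and strongly in $L^{2}(B_{1/4})$; in particular $u_{0}=0$ a.e.\ on $B_{1/4}\setminus\Omega$, which is the analytic content of the homogeneous boundary condition on $\partial_{w}\Omega_{1/8}$.

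The core step is passing to the limit in the weak formulation of $(\ref{1})$ tested against an arbitrary $\varphi\in W_{0}^{1,2}(\Omega_{1/8})$. For the potential term, the hypothesis with $\rho=1/8$, $\theta=1/4$ gives
$$|\langle V_{k}u_{k},\varphi\rangle|\le\tfrac{1}{k}\,\|u_{k}\|_{L^{1,2}_{1/8,1/4}(\Omega)}\,\|\nabla\varphi\|_{L^{2}(\Omega_{1/8})},$$
and the annular norm is controlled by the uniform $W^{1,2}(\Omega_{1/4})$ estimate, so this term is $O(1/k)$. The $g_{k}$-term is $O(1/k)$ directly. For $\vec f_{k}$, pick a cutoff $\eta\in C_{c}^{\infty}(B_{1/4})$ with $\eta\equiv 1$ on $B_{1/8}$; the measure hypothesis gives $\int_{\Omega_{1/4}}|\vec f_{k}|^{2}\eta^{2}\,dx\le (1/k)^{2}\int_{B_{1/4}}|\nabla\eta|^{2}\,dx$, whence $\int_{\Omega_{1}}\vec f_{k}\cdot\nabla\varphi\,dx=O(1/k)$ by Cauchy--Schwarz. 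The Laplacian term passes to the limit by weak convergence, so
$$\int_{\Omega_{1/8}}\nabla u_{0}\cdot\nabla\varphi\,dx=0\qquad\text{for every }\varphi\in W_{0}^{1,2}(\Omega_{1/8}).$$
Hence $p:=u_{0}|_{B_{1/8}}$ (extended by $0$ on $B_{1/8}\setminus\Omega$) solves the homogeneous Dirichlet problem on $\Omega_{1/8}$, and strong $L^{2}$-convergence yields $\int_{\Omega_{1/8}}|u_{k}-p|^{2}\,dx\to 0$, contradicting the choice of $u_{k}$.

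The main technical obstacle is the potential term $\langle V_{k}u_{k},\varphi\rangle$: it is not a pointwise product but only a distributional pairing, and the norm on the $u_{k}$-slot involves the annular quantity $\|u_{k}\|_{L^{1,2}_{1/8,1/4}(\Omega)}$ rather than $\|u_{k}\|_{L^{2}}$ alone. To kill this term in the limit I must first absorb the annular norm into a uniform $W^{1,2}$-bound on a strictly larger scale than the test-function support, which is precisely why Lemma~\ref{ee} is proved on a separated pair of scales $t<s$. Coordinating the cutoff radii $1/8$ (for $\varphi$) and $1/4$ (for the energy estimate and the annular seminorm) is what ultimately makes the compactness argument close.
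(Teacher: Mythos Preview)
Your proposal is correct and follows essentially the same approach as the paper's proof: a contradiction/compactness argument using the energy estimate (Lemma~\ref{ee}) at scales $t=1/4$, $s=1/2$ to gain a uniform $W^{1,2}$ bound, Rellich--Kondrachov on the zero extensions, and then passage to the limit in the weak formulation, handling the $V_{k}$, $g_{k}$, $\vec f_{k}$ terms exactly as you describe. The only cosmetic discrepancy is that you reuse the symbol $\varepsilon_{0}$ for both the contradiction constant and the smallness threshold of Lemma~\ref{ee}; the paper keeps these separate (writing $\bar\varepsilon$ for the former), which you may wish to do as well.
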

\begin{proof}
We prove it by contradiction. Suppose that there exists $\bar{\varepsilon}>0$, $u_{k},~\vec{f}_{k}$, $g_{k}$ and $V_{k}$ where $\langle V_{k}\cdot,\cdot\rangle$ is bounded on $W^{1,2}(\Omega_{1})\times W_{0}^{1,2}(\Omega_{1})$,
$|\vec{f}_{k}|^{2}$ is an admissible measure on $W^{1,2}(\Omega_{1})$, $g_{k}$ is a bounded linear functional on $W_{0}^{1,2}(\Omega_{1})$, $u_k$ is a solution of
\begin{eqnarray*}
\left\{
\begin{array}{rcll}
-\Delta u_{k}+V_{k}u_{k}&=&-\text{div}\vec{f}_{k}+g_{k}\qquad&\text{in}~~B_{1}\cap\Omega,\\
u_{k}&=&0\qquad&\text{on}~~B_{1}\cap\partial\Omega, \\
\end{array}
\right.
\end{eqnarray*}
satisfying
$$\frac{1}{|B_{1}|}\displaystyle\int_{\Omega_{1}}u_{k}^{2}dx\leq 1,
$$
and for any $0<\rho\leq\displaystyle\frac{1}{2}$, $\psi\in W^{1,2}(\Omega_{1}),~\varphi\in W_{0}^{1,2}(\Omega_{1})$ with $\text{supp}\{\varphi\}\subset \overline{\Omega_{\rho}}$, $\phi\in W_{0}^{1,2}(B_{1})$ with $\text{supp}\{\phi\}\subset \overline{B_{\rho}}$,
\begin{equation}\label{vn1}
|\langle V_{k}\psi,\varphi\rangle|\leq \frac{1}{k}\|\psi\|_{L_{\rho,\theta}^{1,2}(\Omega)}\|\nabla\varphi\|_{L^{2}(\Omega_{\rho})},~~~ \forall \rho<\theta\leq2\rho,
\end{equation}
\begin{equation}\label{g2}
|\langle g_{k},\varphi\rangle|\leq \frac{1}{k}\|\nabla\varphi\|_{L^{2}(\Omega_{\rho})},
\end{equation}
\begin{equation}\label{f1}
\displaystyle\int_{\Omega_{\rho}}|\vec{f}_{k}|^{2}\phi^{2}dx\leq\frac{1}{k^{2}}\displaystyle\int_{B_{\rho}}|\nabla \phi|^{2}dx,
\end{equation}
such that for any function $p$ defined in $B_{\frac{1}{8}}$,  which is  the solution of
\begin{equation}\label{p}
\left\{
\begin{array}{rcll}
-\Delta p&=&0\qquad&\text{in}~~\Omega_{\frac{1}{8}},\\
p&=&0\qquad&\text{on}~~\partial_{w}\Omega_{\frac{1}{8}}, \\
\end{array}
\right.
\end{equation}
the following inequality holds,
\begin{equation}\label{j-1}\int_{\Omega_{\frac{1}{8}}}|u_{k}-p|^{2}dx\geq\bar{\varepsilon}^{2}.
\end{equation}
If letting $k$ be large such that $\displaystyle\frac{1}{k}+\displaystyle\frac{1}{k^{2}}<\varepsilon_{0}$ and taking $t=\displaystyle\frac{1}{4}$, $s=\displaystyle\frac{1}{2}$, then we obtain from  Lemma $\ref{ee}$ that
$$\displaystyle\int_{\Omega_{\frac{1}{4}}}|\nabla u_{k}|^{2}dx\leq C\int_{\Omega_{1}}|u_{k}|^{2}dx+1\leq C.
$$
Denoting $u_{k}$'s 0-extension by $u_{k,0}$, by the definition of weak solution, we have $u_{k,0}\in W^{1,2}(B_{\frac{1}{4}})$. Moreover,
$$\int_{B_{\frac{1}{4}}}(|\nabla u_{k,0}|^{2}+u_{k,0}^{2})dx\leq C.
$$
Hence there exist a subsequence of $\{u_{k,0}\}$, still denoted by $\{u_{k,0}\}$ and $u_{0}\in W^{1,2}(B_{\frac{1}{8}})$ such that
$$u_{k,0}\rightharpoonup u_{0}\quad\text{in}~W^{1,2}(B_{\frac{1}{8}}),
$$
$$u_{k,0}\rightarrow u_{0}\quad\text{in}~L^{2}(B_{\frac{1}{8}}).
$$
Now we denote $u_{0}|_{\Omega_{\frac{1}{8}}}=u$. Then we will show that $u$ itself is a solution of $(\ref{p})$, which is a contradiction. It is obviously that $u_{0}=0$ in $B_{\frac{1}{8}}\cap\Omega^{c}$, so we only need to prove $u$ satisfies the equation. In fact, for any test function $\eta\in W_{0}^{1,2}(\Omega_{\frac{1}{8}})$, we extend $\eta=0$ in $B_{\frac{1}{8}}\backslash \Omega_{\frac{1}{8}}$, still denoted by $\eta$,
then we have
\begin{equation}\label{nc}
\displaystyle\int_{\Omega_{\frac{1}{8}}}\nabla u_{k}\cdot \nabla\eta dx+\langle V_{k}u_{k},\eta\rangle=\displaystyle\int_{\Omega_{\frac{1}{8}}}\vec{f}_{k}\cdot \nabla\eta dx+\langle g_{k},\eta \rangle.
\end{equation}
In $(\ref{f1})$, if we take $\varphi\in C_{0}^{\infty}(B_{1})$ with $\varphi=1$ in $B_{\frac{1}{4}}$, $\varphi=0$ in $B_{1}\backslash B_{\frac{1}{2}}$, $0\leq\varphi\leq1$ in $B_{1}$, we yield that
$$\displaystyle\int_{\Omega_{\frac{1}{4}}}|\vec{f}_{k}|^{2}dx\leq\frac{1}{k^{2}}\displaystyle\int_{B_{1}}|\nabla \varphi|^{2}dx\leq \frac{C}{k^{2}},
$$
where $C$ is a universal constant. By H\"{o}lder inequality, we have
\begin{eqnarray*}
\left|\displaystyle\int_{\Omega_{\frac{1}{8}}}\vec{f}_{k}\cdot \nabla\eta dx\right| \leq
\left(\displaystyle\int_{\Omega_{\frac{1}{8}}}|\vec{f}_{k}|^{2}dx\right)^{\frac{1}{2}}
\left(\displaystyle\int_{\Omega_{\frac{1}{8}}}|\nabla \eta|^{2}dx\right)^{\frac{1}{2}}
\leq \frac{C}{k}\rightarrow0,~~\text{as}~k\rightarrow\infty.
\end{eqnarray*}
Next we apply $(\ref{vn1})$ and $(\ref{g2})$ to obtain
$$|\langle V_{k}u_{k},\eta\rangle|\leq \frac{1}{k}\|u_{k}\|_{L_{\frac{1}{8},\frac{1}{4}}^{1,2}(\Omega)}\|\nabla\eta\|_{L^{2}(\Omega_{\frac{1}{8}})}\leq \frac{C}{k}\rightarrow0,~~\text{as}~k\rightarrow\infty,
$$
$$|\langle g_{k},\eta\rangle|\leq \frac{1}{k}\|\nabla\eta\|_{L^{2}(\Omega_{\frac{1}{8}})}\leq \frac{C}{k}\rightarrow0,~~\text{as}~k\rightarrow\infty.
$$
Now letting  $k\rightarrow\infty$ in $(\ref{nc})$, we have  that
$$\displaystyle\int_{\Omega_{\frac{1}{8}}}\nabla u\cdot \nabla\eta dx=0.
$$
Thus we yield a harmonic function $u$ in $\Omega_{\frac {1}{8}}$, which is contradict to (\ref{j-1}).
\end{proof}

In the following we give a key lemma, which will be used repeatedly in next section.
\begin{lm}[Key Lemma]\label{key1} Let $\Omega$ be a bounded domain satisfying $(\bar{r},\nu)$-condition at $0\in\partial\Omega$ with $\bar{r}=1$. Then there exists $C_{0},~0<\lambda<1,~\delta_{0}>0,~0<\alpha<1$ such that for any weak solution of $(\ref{1})$
with
$$\displaystyle\frac{1}{|B_{1}|}\displaystyle\int_{\Omega_{1}}u^{2}dx\leq 1,
$$
and
$$|\langle V\psi,\varphi\rangle|\leq \delta_{0}\|\psi\|_{L_{\rho,\theta}^{1,2}(\Omega)}\|\nabla\varphi\|_{L^{2}(\Omega_{\rho})},~~~\forall \rho<\theta\leq2\rho,$$
$$|\langle g,\varphi\rangle|\leq \delta_{0}\|\nabla\varphi\|_{L^{2}(\Omega_{\rho})},$$
$$\displaystyle\int_{\Omega_{\rho}}|\vec{f}|^{2}\phi^{2}dx\leq\delta_0^{2}\displaystyle\int_{B_{\rho}}|\nabla \phi|^{2}dx,
$$
for any $0<\rho\leq\displaystyle\frac{1}{2}$, $\psi\in W^{1,2}(\Omega_{1})$ and $\varphi\in W_{0}^{1,2}(\Omega_{1})$ with $\text{supp}\{\varphi\}\subset \overline{\Omega_{\rho}}$, $\phi\in W_{0}^{1,2}(B_{1})$ with $\text{supp}\{\phi\}\subset \overline{B_{\rho}}$, it follows that
$$\left(\frac{1}{|B_{\lambda}|}\int_{\Omega_{\lambda}}|u|^{2}dx\right)^{\frac{1}{2}}\leq\lambda^{\alpha}.
$$
\end{lm}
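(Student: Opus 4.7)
The plan is to approximate $u$ by a harmonic function $p$ via Lemma \ref{lm4.2}, and then exploit boundary Hölder continuity of harmonic functions at a point where $\Omega$ satisfies an exterior density condition. Concretely, for $\varepsilon\in(0,1]$ to be chosen later, I set $\delta_{0}=\delta(\varepsilon)$ from Lemma \ref{lm4.2}; the hypotheses on $V$, $g$, $\vec f$ then produce a harmonic $p$ in $\Omega_{1/8}$ with $p=0$ on $\partial_{w}\Omega_{1/8}$ and $\|u-p\|_{L^{2}(\Omega_{1/8})}\le\varepsilon$. The triangle inequality immediately gives $\|p\|_{L^{2}(\Omega_{1/8})}\le |B_{1}|^{1/2}+1$.

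Next I would apply Theorem \ref{bounho} to $p$ with $V=\vec f=g\equiv 0$. Since $\Omega$ satisfies the $(\bar r,\nu)$-condition at $0$ and $p$ vanishes on $\partial_{w}\Omega_{1/8}$, there exist $r_{0}>0$ and $0<\alpha_{p}<1$, depending only on $n$ and $\nu$, such that for all $0<r\le r_{0}$,
$$\operatorname{osc}_{\Omega_{r}} p \;\le\; C\,r^{\alpha_{p}}\,r_{0}^{-\alpha_{p}}\sup_{\Omega_{r_{0}}}|p|.$$
A standard local $L^{\infty}$–$L^{2}$ bound for the harmonic function $p$ (extending $p$ by zero across $\partial_{w}\Omega_{1/8}$ and again invoking the $(\bar r,\nu)$-condition through the boundary Moser estimate) yields $\sup_{\Omega_{r_{0}}}|p|\le C_{0}(n,\nu)$. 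Combined with continuity of $p$ up to $0$ and the vanishing boundary data, so that $p(0)=0$, this gives $|p(x)|\le C_{1}|x|^{\alpha_{p}}$ for $|x|\le r_{0}$, with $C_{1}=C_{1}(n,\nu)$.

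For any $0<\lambda\le\min\{1/8,r_{0}\}$ the triangle inequality then gives
$$\left(\frac{1}{|B_{\lambda}|}\int_{\Omega_{\lambda}}|u|^{2}dx\right)^{\!1/2}\;\le\;\left(\frac{1}{|B_{\lambda}|}\int_{\Omega_{\lambda}}|u-p|^{2}dx\right)^{\!1/2}+\left(\frac{1}{|B_{\lambda}|}\int_{\Omega_{\lambda}}|p|^{2}dx\right)^{\!1/2}\;\le\;\frac{\varepsilon}{\lambda^{n/2}|B_{1}|^{1/2}}+C_{1}\lambda^{\alpha_{p}}.$$
I would then fix $\alpha\in(0,\alpha_{p})$, choose $\lambda$ small enough that $C_{1}\lambda^{\alpha_{p}}\le\tfrac12\lambda^{\alpha}$, and finally choose $\varepsilon$ (which fixes $\delta_{0}=\delta(\varepsilon)$) small enough that $\varepsilon/(\lambda^{n/2}|B_{1}|^{1/2})\le\tfrac12\lambda^{\alpha}$. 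This delivers the desired bound $(|B_{\lambda}|^{-1}\int_{\Omega_{\lambda}}|u|^{2})^{1/2}\le\lambda^{\alpha}$.

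The main technical point I expect to handle with care is extracting both the $L^{\infty}$ bound on $p$ near $0$ and the pointwise value $p(0)=0$, because $p$ is only constructed as a weak $W^{1,2}$–limit in Lemma \ref{lm4.2}. The cleanest way is to invoke Theorem \ref{bounho} directly with zero lower–order data: it simultaneously provides continuity up to $\partial_{w}\Omega_{1/8}$ and the Hölder decay at $0$, avoiding any ad hoc maximum-principle considerations. Once that regularity is in hand, the rest is just choosing the small parameters $\alpha$, $\lambda$, $\varepsilon$ in the right order.
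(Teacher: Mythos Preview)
Your proposal is correct and follows essentially the same approach as the paper: apply Lemma~\ref{lm4.2} to approximate $u$ by a harmonic $p$, use a boundary $L^{\infty}$--$L^{2}$ estimate together with Theorem~\ref{bounho} to obtain $|p(x)|\le C|x|^{\alpha_{0}}$ near $0$, and then split $\int_{\Omega_{\lambda}}|u|^{2}$ via the triangle inequality and choose $\alpha<\alpha_{0}$, $\lambda$, and $\varepsilon$ (hence $\delta_{0}$) in that order. The only cosmetic difference is that the paper writes the final inequality for the squared average rather than the square root, and states the $L^{\infty}$ bound on $p$ simply as ``by the property of harmonic functions'' without spelling out the boundary Moser estimate you describe.
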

\begin{proof}
Let $p$ be the function of the previous lemma which satisfies that
$$
\int_{\Omega_{\frac{1}{8}}}|u-p|^{2}dx\leq \varepsilon^2
$$
for some $\varepsilon<1$ to be determined. By the property of harmonic functions, it follows that
$$\|p\|_{L^{\infty}(B_{\frac{1}{16}}\cap\Omega)}\leq C\|p\|_{L^{2}(B_{\frac{1}{8}}\cap\Omega)}\leq C(\varepsilon+|B_{1}|^{\frac{1}{2}})\leq C.
$$
Since $\Omega$ satisfies $\nu$-condition, then by Theorem $\ref{bounho}$, there exist constant $0<r_{0}\leq\displaystyle\frac{1}{16}$, $0<\alpha_{0}<1$ and $C_{0}=C_{0}(n)$ such that for any $x\in B_{r_{0}}\cap\Omega$,
$$|p(x)|=|p(x)-p(0)|\leq C\frac{|x|^{\alpha_{0}}}{r_{0}^{\alpha_0}}\|p\|_{L^{\infty}(B_{\frac{1}{16}}\cap\Omega)}\leq C_{0}\frac{|x|^{\alpha_{0}}}{r_{0}^{\alpha_0}}.
$$

Therefore for each $0<\lambda<r_{0}$, we have
\begin{eqnarray*}
\frac{1}{|B_{\lambda}|}\int_{\Omega_{\lambda}}|u|^{2}dx=\frac{1}{|B_{\lambda}|}\int_{\Omega_{\lambda}}|u-p(0)|^{2}dx
&\leq&\frac{2}{|B_{\lambda}|}\int_{\Omega_{\lambda}}|u-p|^{2}dx
+\frac{2}{|B_{\lambda}|}\int_{\Omega_{\lambda}}|p-p(0)|^{2}dx\\
&\leq&\frac{2\varepsilon^{2}}{|B_{\lambda}|}+2C_{0}^{2}\frac{\lambda^{2\alpha_{0}}}{r_{0}^{2\alpha_{0}}}.
\end{eqnarray*}
Now for any $\alpha<\alpha_{0}$, we take $\lambda$ small enough such that
$$2C_{0}^{2}\frac{\lambda^{2\alpha_{0}}}{r_{0}^{2\alpha_{0}}}\leq\frac{1}{2}\lambda^{2\alpha},
$$
and further we take $\varepsilon$ small sufficiently such that
$$\frac{2\varepsilon^{2}}{|B_{\lambda}|}\leq\frac{1}{2}\lambda^{2\alpha}
$$
and  $\delta_{0}=\delta(\varepsilon)$ in Lemma $\ref{lm4.2}$. Thus  Lemma follows.
\end{proof}

\section{Continuity of solutions under Dini decay conditions}
In this section, we will prove Theorem $\ref{linfty}$. We divide the proof into the following five steps. The first step is to normalize the problem. Secondly we use the key lemma repeatedly to give a iteration result, that is, we approximate the solution in different scales. The next step is to prove the sum of errors from each scale is convergent. Finally scaling back, the continuity of solution follows.\\
\\
{\bf{Proof of Theorem $\ref{linfty}$:}}\\
\
\

$\bf{Step~1: Normalization}$\\

Firstly we give the normalization of the estimates. Without loss of generality, since $C_{V}$ is sufficiently small, we can assume that $C_{V}\leq \displaystyle\frac{\delta_{0}}{128}$ where $\delta_{0}$ is the constant in Lemma $\ref{key1}$.

We set
$$w(x)=\displaystyle\frac{u(x)}{\left(\displaystyle\frac{1}{|B_{1}|}\int_{\Omega}u^{2}dx
\right)^{\frac{1}{2}}+\displaystyle\frac{4|B_{1}|^{\frac{n+2}{2n}}}{\delta_{0}}\left(\omega(1)
+\displaystyle\frac{1}{1-\lambda}\displaystyle\int_{0}^{1}\frac{\omega(s)}{s}ds\right)}
\triangleq\displaystyle\frac{u(x)}{A},\quad x\in B_{1}\cap\Omega,
$$
Then $w(x)$ is a weak solution of
\begin{equation}\label{4-2}
\left\{
\begin{array}{rcll}
-\Delta w+Vw&=&-\text{div}\vec{f}_{A}+g_{A}\qquad&\text{in}~~B_{1}\cap\Omega,\\
w&=&0\qquad&\text{on}~~B_{1}\cap\partial\Omega,\\
\end{array}
\right.
\end{equation}
where $\vec{f}_{A}=\displaystyle\frac{\vec{f}}{A}$, $g_{A}=\displaystyle\frac{g}{A}$.

Then it follows that
\begin{equation}\label{4-3}\left(\frac{1}{|B_{1}|}\displaystyle\int_{\Omega_{1}}w^{2}dx\right)^{\frac{1}{2}}=
\frac{\left(\displaystyle\frac{1}{|B_{1}|}\displaystyle\int_{\Omega_{1}}u(x)^{2}dx\right)^{\frac{1}{2}}}
{A}\leq1.
\end{equation}
Moreover, for any $0<r\leq\displaystyle\frac{1}{2}$, $\varphi\in W_{0}^{1,2}(\Omega_{1})$ with $\text{supp}\{\varphi\}\subset \overline{\Omega_{r}}$, $\phi\in W_{0}^{1,2}(B_{1})$ with $\text{supp}\{\phi\}\subset \overline{B_{r}}$, we have
\begin{eqnarray}\label{4-5}
\left|\langle g_{A},\varphi \rangle\right|
\leq\frac{\omega(r)}{Ar^{2}}|B_{r}|^{\frac{n+2}{2n}}\|\nabla \varphi\|_{L^{2}(\Omega_{r})},
\end{eqnarray}
\begin{eqnarray}\label{4-6}
\int_{\Omega_{r}}|f_{A}|^{2}\phi^{2}dx
\leq\frac{\omega(r)^{2}}{A^{2}}\displaystyle\int_{B_{r}}|\nabla \phi(x)|^{2}dx,
\end{eqnarray}

\
\

$\bf{Step~2:~Iterating~results}$\\

Now, we prove the following claim inductively: there is a nonnegative sequence $\{T_{k}\}_{k=0}^{\infty}$ such that
\begin{equation}\label{ite}
\left(\frac{1}{|B_{\lambda^{k}}|}\int_{\Omega_{\lambda^{k}}}w^{2}dx\right)^{\frac{1}{2}}\leq T_{k},\quad \forall k\geq0,
\end{equation}
where $T_{0}=1$,
\begin{equation}\label{dtk}
T_{k}=\max\left\{\lambda^{\alpha} T_{k-1},
~\frac{16\omega(\lambda^{k})|B_{1}|^{\frac{n+2}{2n}}}{A\delta_{0}}\right\}
\end{equation}
for $k=1,2,\cdots$. 

Firstly by normalized assumption in the {\bf Step 1}, we know that $(\ref{ite})$  holds for $k=0$  since $T_{0}=1$, i.e.
$$\left(\displaystyle\frac{1}{|B_{1}|}\displaystyle\int_{\Omega_{1}}w^{2}dx\right)^{\frac{1}{2}}\leq 1=T_{0}.$$
Now assume by induction that the conclusion is true for $k$. We set
$$\tilde{w}(x)=\frac{w(\lambda^{k}x)}{T_{k}},\quad \tilde{\Omega}=\{x:\lambda^{k}x\in \Omega\}.
$$
It is easy to check that for any $k\geq0$ and any $0<r\leq1$,
$$\frac{|B_{r}\cap\tilde{\Omega}^{c}|}{|B_{r}|}
=\frac{|B_{r\lambda^{k}}\cap\Omega^{c}|}{|B_{r\lambda^{k}}|}\geq\nu.
$$
and $\tilde{w}$ is a weak solution to
\begin{eqnarray*}
\left\{
\begin{array}{rcll}
-\Delta \tilde{w}+\tilde{V}\tilde{w}&=&-\text{div}\vec{\tilde{f}}+\tilde{g}\qquad&\text{in}~~B_{1}\cap\tilde{\Omega},\\
\tilde{w}&=&0\qquad&\text{on}~~B_{1}\cap\partial\tilde{\Omega}, \\
\end{array}
\right.
\end{eqnarray*}
where $\vec{\tilde{f}}(x)
=\displaystyle\frac{\lambda^{k}\vec{f}_{A}(\lambda^{k}x)}{T_{k}}$, $\langle \tilde{V}\cdot,\cdot\rangle$ is bounded on $W^{1,2}(\tilde{\Omega}_{1})\times W_{0}^{1,2}(\tilde{\Omega}_{1})$ satisfying
$$\langle \tilde{V}\psi,\varphi\rangle=\frac{\lambda^{2k}}{\lambda^{nk}}\langle V\tilde{\psi},\tilde{\varphi}\rangle
$$
for $\tilde{\psi}(x)=\psi(\displaystyle\frac{x}{\lambda^{k}})\in W^{1,2}(\Omega_{\lambda^{k}}),~\tilde{\varphi}(x)=
\varphi(\displaystyle\frac{x}{\lambda^{k}})\in W_{0}^{1,2}(\Omega_{\lambda^{k}})$, and $\tilde{g}$ is a bounded linear functional on $W_{0}^{1,2}(\tilde{\Omega}_{1})$ satisfying
$$\langle \tilde{g},\varphi\rangle=\frac{\lambda^{2k}}{T_{k}\lambda^{nk}}\langle g_{A},\tilde{\varphi}\rangle
$$
for $\tilde{\varphi}(x)=
\varphi(\displaystyle\frac{x}{\lambda^{k}})\in W_{0}^{1,2}(\Omega_{\lambda^{k}})$.
Thus by using the inductive assumption, we obtain that
$$\frac{1}{|B_{1}|}\displaystyle\int_{\tilde{\Omega}_{1}}\tilde{w}^{2}dx=
\frac{1}{|B_{1}|}\displaystyle\int_{\tilde{\Omega}_{1}}\frac{|w(\lambda^{k}x)|^{2}}{T_{k}^{2}}dx
=\frac{\displaystyle\frac{1}{|B_{\lambda^{k}}|}\displaystyle\int_{\Omega_{\lambda^{k}}}|w|^{2}dx}{T_{k}^{2}}
\leq1,
$$

\begin{equation}\label{tildev}
\begin{aligned}
\left|\langle\tilde{V}\psi,\varphi \rangle\right|
&=\frac{\lambda^{2k}}{\lambda^{nk}}\left|\langle V\tilde{\psi},\tilde{\varphi}\rangle\right|\\
&\leq\frac{\lambda^{2k}}{\lambda^{nk}}C_{V}\|\tilde{\psi}\|_{L_{\lambda^{k}\rho,\lambda^{k}\theta}^{1,2}(\Omega)}\|\nabla \tilde{\varphi}\|_{L^{2}(\Omega_{\lambda^{k}\rho})}\\
&\leq C_{V}\|\psi\|_{L_{\rho,\theta}^{1,2}(\tilde{\Omega})}\|\nabla \varphi\|_{L^{2}(\tilde{\Omega}_{\rho})}\\
&\leq\frac{\delta_{0}}{128}\|\psi\|_{L_{\rho,\theta}^{1,2}(\tilde{\Omega})}\|\nabla \varphi\|_{L^{2}(\tilde{\Omega}_{\rho})},~~~\forall \rho<\theta\leq2\rho,
\end{aligned}
\end{equation}
for any $0<\rho\leq\displaystyle\frac{1}{2}$, $\psi\in W^{1,2}(\tilde{\Omega}_{1})$, $\varphi\in W_{0}^{1,2}(\tilde{\Omega}_{1})$ with $\text{supp}\{\varphi\}\subset \overline{\tilde{\Omega}_{\rho}}$,
$\tilde{\psi}(x)=\psi(\displaystyle\frac{x}{\lambda^{k}})\in W^{1,2}(\Omega_{\lambda^{k}})$ and  $\tilde{\varphi}(x)=
\varphi(\displaystyle\frac{x}{\lambda^{k}})\in W_{0}^{1,2}(\Omega_{\lambda^{k}})$. Here we have used $C_{V}\leq\displaystyle\frac{\delta_{0}}{128}$. Similarly, since $T_{k}\geq\displaystyle\frac{16\omega(\lambda^{k})|B_{1}|^{\frac{n+2}{2n}}}{A\delta_{0}}$, we have from (\ref{4-5}),
\begin{eqnarray*}
\left|\langle\tilde{g},\varphi \rangle\right|
&=&\frac{\lambda^{2k}}{T_{k}\lambda^{nk}}\left|\langle g_{A},\tilde{\varphi}\rangle\right|\\
&\leq&\frac{\lambda^{2k}}{T_{k}\lambda^{nk}}\frac{\omega(\lambda^{k}\rho)}{A(\lambda^{k}\rho)^{2}}|B_{\lambda^{k}\rho}|^{\frac{n+2}{2n}}\|\nabla \tilde{\varphi}\|_{L^{2}(\Omega_{\lambda^{k}\rho})}\\
&\leq&\frac{\omega(\lambda^{k})}{AT_{k}}|B_{1}|^{\frac{n+2}{2n}}\|\nabla \varphi\|_{L^{2}(\tilde{\Omega}_{\rho})}\\
&\leq&\frac{\delta_{0}}{16}\|\nabla \varphi\|_{L^{2}(\tilde{\Omega}_{\rho})},
\end{eqnarray*}
and from (\ref{4-6}), for any $\phi\in W_{0}^{1,2}(B_{1})$ with $\text{supp}\{\phi\}\subset \overline{B_{\rho}}$, $\tilde{\phi}(x)=
\phi(\displaystyle\frac{x}{\lambda^{k}})\in W_{0}^{1,2}(B_{\lambda^{k}})$,
\begin{eqnarray*}
\int_{\tilde{\Omega}_{\rho}}|\vec{\tilde{f}}|^{2}\phi^{2}dx&=&\frac{\lambda^{2k}}{T_{k}^{2}}\int_{\tilde{\Omega}_{\rho}}|\vec{f}_{A}(\lambda^{k}x)|^{2}
|\varphi(x)|^{2}dx\\
&=&\frac{\lambda^{2k}}{T_{k}^{2}}\frac{1}{\lambda^{nk}}\int_{\Omega_{\lambda^{k}\rho}}|\vec{f}_{A}(t)|^{2}|\tilde{\phi}(t)|^{2}dt\\
&\leq&\frac{\lambda^{2k}}{T_{k}^{2}}\frac{1}{\lambda^{nk}}\frac{\omega(\lambda^{k}\rho)^{2}}{A^{2}}
\int_{B_{\lambda^{k}\rho}}|\nabla\tilde{\phi}(t)|^{2}dt\\
&\leq&\frac{\delta_{0}^{2}}{256}\int_{B_{\rho}}|\nabla\phi(x)|^{2}dx.
\end{eqnarray*}
Thus, we obtain that  for any $0<\rho\leq\displaystyle\frac{1}{2}$, $\psi\in W^{1,2}(\tilde{\Omega}_{1})$, $\varphi\in W_{0}^{1,2}(\tilde{\Omega}_{1})$ with $\text{supp}\{\varphi\}\subset \overline{\tilde{\Omega}_{\rho}}$, for any $\phi\in W_{0}^{1,2}(B_{1})$ with $\text{supp}\{\phi\}\subset \overline{B_{\rho}}$,
$$|\langle \tilde{V}\psi,\varphi\rangle|\leq \delta_{0}\|\psi\|_{L_{\rho,\theta}^{1,2}(\tilde{\Omega})}\|\nabla\varphi\|_{L^{2}(\tilde{\Omega}_{\rho})}, ~~~\forall\rho<\theta\leq2\rho$$
$$|\langle \tilde{g},\varphi\rangle|\leq \delta_{0}\|\nabla\varphi\|_{L^{2}(\tilde{\Omega}_{\rho})},$$
$$\int_{\tilde{\Omega}_{\rho}}|\vec{\tilde{f}}|^{2}\phi^{2}dx\leq\delta_{0}^{2}\int_{B_{\rho}}|\nabla\phi|^{2}dx.
$$
Now we can apply Lemma $\ref{key1}$ for $\tilde{w}$ to obtain that
$$\left(\frac{1}{|B_{\lambda}|}\int_{\tilde{\Omega}_{\lambda}}\tilde{w}^{2}dx\right)^{\frac{1}{2}}\leq\lambda^{\alpha}.
$$
We scale back to get
$$\left(\frac{1}{|B_{\lambda^{k+1}}|}\int_{{\Omega}_{\lambda^{k+1}}}|w(x)|^{2}dx\right)^{\frac{1}{2}}
\leq\lambda^{\alpha} T_{k}\leq T_{k+1}.
$$
Thus we prove the $(k+1)$-th step.

{\bf{Step~3: Prove $\displaystyle\sum\limits_{k=0}^{\infty}T_{k}$ is convergent.}}\\

By the definition of $T_{k}$, $(\ref{dtk})$ implies that
\begin{equation}\label{Tk}
\begin{aligned}
T_{i}
\leq\lambda^{\alpha} T_{i-1}
+\frac{16\omega(\lambda^{i})|B_{1}|^{\frac{n+2}{2n}}}{A\delta_{0}},\quad i=1,2,\cdots.
\end{aligned}
\end{equation}
Hence for any fixed $k\geq1$,
\begin{eqnarray*}
\sum_{i=1}^{k}T_{i}&\leq&\lambda^{\alpha}\sum_{i=1}^{k}T_{i}+\lambda^{\alpha}T_{0}
+\sum_{i=1}^{k}\frac{16\omega(\lambda^{i})|B_{1}|^{\frac{n+2}{2n}}}{A\delta_{0}}\\
&\leq&\lambda^{\alpha}\sum_{i=1}^{k}T_{i}+\lambda^{\alpha}T_{0}
+\sum_{i=0}^{k}\frac{16\omega(\lambda^{i})|B_{1}|^{\frac{n+2}{2n}}}{A\delta_{0}}.
\end{eqnarray*}
Due to for any $k\geq0$,
$$
\sum_{i=0}^{k}\omega(\lambda^{i})\leq
\left(\omega(1)+\frac{1}{1-\lambda}\int_{0}^{1}\frac{\omega(s)}{s}ds\right),
$$
it follows that for any $k\geq1$,
\begin{eqnarray*}
\sum_{i=1}^{k}T_{i}\leq\lambda^{\alpha}\sum_{i=1}^{k}T_{i}+\lambda^{\alpha}T_{0}
+\frac{16|B_{1}|^{\frac{n+2}{2n}}}{A\delta_{0}}\left(\omega(1)+\frac{1}{1-\lambda}\int_{0}^{1}\frac{\omega(s)}{s}ds\right).
\end{eqnarray*}
By the definition of $A$, we obtain
\begin{eqnarray*}
\sum_{i=1}^{k}T_{i}\leq\lambda^{\alpha}\sum_{i=1}^{k}T_{i}+\lambda^{\alpha}T_{0}+4.
\end{eqnarray*}
Consequently we have
$$\sum_{i=1}^{k}T_{i}\leq \frac{4+\lambda^{\alpha}}{1-\lambda^{\alpha}}.
$$
Since $T_{0}=1$, then for any $k\geq0$ it follows
$$\sum_{i=0}^{k}T_{i}\leq\frac{4+\lambda^{\alpha}}{1-\lambda^{\alpha}}+1.
$$
Thus we have shown $\sum\limits_{k=0}^{\infty}T_{k}$ is convergent.\\ 

{\bf{Step~4: Prove $w$ is continuous at 0.}}\\

Using inequality $(\ref{Tk})$ repeatedly, we have for any $k\geq1$,
\begin{eqnarray*}
T_{k}&\leq&\lambda^{k\alpha}T_{0}+\lambda^{k\alpha}\sum_{i=1}^{k}
\frac{16\omega(\lambda^{i})|B_{1}|^{\frac{n+2}{2n}}}{A\delta_{0}\lambda^{i\alpha}}\\
&\leq&\lambda^{k\alpha}T_{0}
+\frac{16\lambda^{k\alpha}|B_{1}|^{\frac{n+2}{2n}}}{A\delta_{0}\lambda^{\alpha}(1-\lambda)}\int_{\lambda^{k}}^{1}\frac{\omega(s)}{s^{1+\alpha}}ds
.
\end{eqnarray*}
Combining with $T_{0}=1$ we have for any $k\geq0$,
\begin{eqnarray*}
T_{k}\leq\lambda^{k\alpha}T_{0}
+\frac{16\lambda^{k\alpha}|B_{1}|^{\frac{n+2}{2n}}}{A\delta_{0}\lambda^{\alpha}(1-\lambda)}\int_{\lambda^{k}}^{1}
\frac{\omega(s)}{s^{1+\alpha}}ds
.
\end{eqnarray*}
It follows that for any $k\geq0$,
\begin{eqnarray*}
\left(\frac{1}{|B_{\lambda^{k}}|}\displaystyle\int_{\Omega_{\lambda^{k}}}|w|^{2}dx\right)^{\frac{1}{2}}
\leq T_{k}
\leq\lambda^{k\alpha}T_{0}
+\frac{16\lambda^{k\alpha}|B_{1}|^{\frac{n+2}{2n}}}{A\delta_{0}\lambda^{\alpha}(1-\lambda)}
\int_{\lambda^{k}}^{1}\frac{\omega(s)}{s^{1+\alpha}}ds.
\end{eqnarray*}
We denote $C_{\lambda,\delta_{0}}=\displaystyle\frac{16|B_{1}|^{\frac{n+2}{2n}}}
{\lambda^{\frac{n}{2}+\alpha}(1-\lambda)\delta_{0}}$ and $\bar{C}_{\lambda,\delta_{0}}=\displaystyle\frac{C_{\lambda,\delta_{0}}}{\lambda^{\alpha}}$. Since for any $0<r\leq1$  there exists a $k\geq0$ such that $\lambda^{k+1}<r\leq \lambda^{k}$, we obtain
\begin{eqnarray*}
\left(\frac{1}{|B_{r}|}\displaystyle\int_{\Omega_{r}}|w|^{2}dx\right)^{\frac{1}{2}}&\leq&
\left(\frac{1}{|B_{\lambda^{k+1}}|}\displaystyle\int_{\Omega_{\lambda^{k}}}|w|^{2}dx\right)^{\frac{1}{2}}\\
&=&\frac{1}{\lambda^{\frac{n}{2}}}\left(\frac{1}{|B_{\lambda^{k}}|}\displaystyle\int_{\Omega_{\lambda^{k}}}|w|^{2}dx\right)^{\frac{1}{2}}\\
&\leq&\frac{\lambda^{k\alpha}}{\lambda^{\frac{n}{2}}}+\frac{C_{\lambda,\delta_{0}}}{A}
\left(\lambda^{k\alpha}\int_{\lambda^{k}}^{1}\frac{\omega(s)}{s^{1+\alpha}}ds\right)\\
&\leq&\frac{r^{\alpha}}{\lambda^{\alpha+\frac{n}{2}}}+\frac{C_{\lambda,\delta_{0}}}{A}
\left(\frac{r^{\alpha}}{\lambda^{\alpha}}\int_{r}^{1}\frac{\omega(s)}{s^{1+\alpha}}ds\right)\\
&\leq&\bar{C}_{\lambda,\delta_{0}}r^{\alpha}+\frac{\bar{C}_{\lambda,\delta_{0}}}{A}
\left(r^{\alpha}\int_{r}^{1}\frac{\omega(s)}{s^{1+\alpha}}ds
\right).
\end{eqnarray*}
By L'Hospital principle, we have
$$\lim\limits_{r\rightarrow0}r^{\alpha}\int_{r}^{1}\frac{\omega(s)}{s^{1+\alpha}}ds=0.
$$
Hence
$$\lim\limits_{r\rightarrow0}\bar{C}_{\lambda,\delta_{0}}r^{\alpha}+\frac{\bar{C}_{\lambda,\delta_{0}}}{A}
\left(r^{\alpha}\int_{r}^{1}\frac{\omega(s)}{s^{1+\alpha}}ds\right)=0,
$$
This implies that $w$ is continuous at 0 in the $L^{2}$ sense.\\

{\bf{Step~5: Scaling back to $u$.}}\\

We notice that $w(x)=\displaystyle\frac{u(x)}{A}$. Then we have for any $0<r\leq 1$,
\begin{eqnarray*}
\left(\frac{1}{|B_{r}|}\displaystyle\int_{\Omega_{r}}|u|^{2}dx\right)^{\frac{1}{2}}
\leq\bar{C}_{\lambda,\delta_{0}}Ar^{\alpha}+\bar{C}_{\lambda,\delta_{0}}
\left(r^{\alpha}\int_{r}^{1}\frac{\omega(s)}{s^{1+\alpha}}ds\right).
\end{eqnarray*}

On the other hand, if $1<r\leq2\text{diam}(\Omega)$, it is easy to calculate that
$$
\left(\frac{1}{|B_{r}|}\displaystyle\int_{\Omega_{r}}|u|^{2}dx\right)^{\frac{1}{2}}
\leq \left(\frac{1}{|B_{1}|}\displaystyle\int_{\Omega}|u|^{2}dx\right)^{\frac{1}{2}}\leq Ar^{\alpha}
$$
Thus we complete the proof of Theorem \ref{linfty}.\\
\\
{\bf{Proof of Theorem $\ref{global}$:}}\\

For any $y\in\Omega$, we set $d=\text{dist}(y,\partial\Omega)$ and we can find $x_{y}\in\partial\Omega$ such that $|y-x_{y}|=d$. Next for any $0<\rho\leq\text{diam}(\Omega)$, we calculate
$$\left(\frac{1}{|\Omega_{\rho}(y)|}\int_{\Omega_{\rho}(y)}|u-u_{\Omega_{\rho}(y)}|^{2}dx\right)^{\frac{1}{2}},
$$
where $u_{\Omega_{\rho}(y)}=\displaystyle\frac{1}{|\Omega_{\rho}(y)|}\int_{\Omega_{\rho}(y)}udx$.

If $\rho\leq d$, we take $R=d$ in Corollary 1.5 of \cite{LWZ2023}, it follows that there exists $\beta=\min\{\alpha_{1},\alpha_{2}\}$ such that
\begin{eqnarray*}
\left(\frac{1}{|\Omega_{\rho}(y)|}\int_{\Omega_{\rho}(y)}|u-u_{\Omega_{\rho}(y)}|^{2}dx\right)^{\frac{1}{2}}&=&
\left(\frac{1}{|B_{\rho}(y)|}\int_{B_{\rho}(y)}|u-u_{B_{\rho}(y)}|^{2}dx\right)^{\frac{1}{2}}\\
&\leq&2\left(\frac{1}{|B_{\rho}(y)|}\int_{B_{\rho}(y)}|u-K_{y}|^{2}dx\right)^{\frac{1}{2}}\\
&\leq&C\left(\left(\displaystyle\frac{1}{|B_{d}(y)|}\int_{B_{d}(y)}u^{2}dx
\right)^{\frac{1}{2}}+N_{2}d^{\alpha_{2}}\right)\left(\frac{\rho}{d}\right)^{\beta}\\
&\leq&C\left(\left(\frac{|B_{2d}(x_{y})|}{|B_{d}(y)|}\displaystyle\frac{1}{|B_{2d}(x_{y})|}\int_{\Omega_{2d}(x_{y})}u^{2}dx
\right)^{\frac{1}{2}}+N_{2}d^{\alpha_{2}}\right)\left(\frac{\rho}{d}\right)^{\beta}.
\end{eqnarray*}
Using Theorem $\ref{linfty}$ at $x_{y}$, there exists $\alpha<\beta$ such that
\begin{eqnarray*}
\left(\displaystyle\frac{1}{|B_{2d}(x_{y})|}\int_{\Omega_{2d}(x_{y})}u^{2}dx
\right)^{\frac{1}{2}}
\leq C(\|u\|_{L^{2}(\Omega)}+N_{2})d^{\alpha}.
\end{eqnarray*}
Combining with above two inequalities, we have
\begin{eqnarray*}
\left(\frac{1}{|\Omega_{\rho}(y)|}\int_{\Omega_{\rho}(y)}|u-u_{\Omega_{\rho}(y)}|^{2}dx\right)^{\frac{1}{2}}&\leq& C
\left((\|u\|_{L^{2}(\Omega)}+N_{2})d^{\alpha}+N_{2}d^{\alpha_{2}}\right)\left(\frac{\rho}{d}\right)^{\beta}\\
&\leq& C
\left((\|u\|_{L^{2}(\Omega)}+N_{2})d^{\alpha}+N_{2}d^{\alpha_{2}}\right)\left(\frac{\rho}{d}\right)^{\alpha}\\
&\leq&C(\|u\|_{L^{2}(\Omega)}+N_{2})\rho^{\alpha}.
\end{eqnarray*}

If $d<\rho\leq\text{diam}(\Omega)$, we can calculate that
\begin{eqnarray*}
\left(\frac{1}{|\Omega_{\rho}(y)|}\int_{\Omega_{\rho}(y)}|u-u_{\Omega_{\rho}(y)}|^{2}dx\right)^{\frac{1}{2}}&\leq&
2\left(\frac{1}{|\Omega_{\rho}(y)|}\int_{\Omega_{\rho}(y)}u^{2}dx\right)^{\frac{1}{2}}\\
&\leq&2\left(\frac{1}{|\Omega_{\rho}(y)|}\int_{\Omega_{\rho+d}(x_{y})}u^{2}dx\right)^{\frac{1}{2}}\\
&=&2\left(\frac{|B_{\rho+d}(x_{y})|}{|\Omega_{\rho}(y)|}\right)^{\frac{1}{2}}
\left(\frac{1}{|B_{\rho+d}(x_{y})|}\int_{\Omega_{\rho+d}(x_{y})}u^{2}dx\right)^{\frac{1}{2}}.
\end{eqnarray*}
Using Theorem $\ref{linfty}$ at $x_{y}$ again, it follows that
\begin{eqnarray*}
\left(\displaystyle\frac{1}{|B_{\rho+d}(x_{y})|}\int_{\Omega_{\rho+d}(x_{y})}u^{2}dx
\right)^{\frac{1}{2}}
\leq C(\|u\|_{L^{2}(\Omega)}+N_{2})(\rho+d)^{\alpha}.
\end{eqnarray*} 
Since $\Omega$ satisfies $(\mu)$-type condition, then $|\Omega_{\rho}(y)|\geq\mu|B_{\rho}(y)|$. Hence combining with above two inequalities, it follows that
\begin{eqnarray*}
\left(\frac{1}{|\Omega_{\rho}(y)|}\int_{\Omega_{\rho}(y)}|u-u_{\Omega_{\rho}(y)}|^{2}dx\right)^{\frac{1}{2}}
&\leq&C\left(\frac{2^{n}}{\mu}\right)^{\frac{1}{2}}
(\|u\|_{L^{2}(\Omega)}+N_{2})(\rho+d)^{\alpha}\\
&\leq&C\left(\frac{2^{n}}{\mu}\right)^{\frac{1}{2}}(\|u\|_{L^{2}(\Omega)}+N_{2})
\left(\frac{\rho+d}{\rho}\right)^{\alpha}\rho^{\alpha}\\
&\leq&C(\|u\|_{L^{2}(\Omega)}+N_{2})\rho^{\alpha}.
\end{eqnarray*}
To summarize, for any $0<\rho\leq\text{diam}(\Omega)$ and $y\in \Omega$, 
$$
\frac{1}{\rho^{\alpha}}\left(\frac{1}{|\Omega_{\rho}(y)|}\int_{\Omega_{\rho}(y)}
|u-u_{\Omega_{\rho}(y)}|^{2}dx\right)^{\frac{1}{2}}\leq C(\|u\|_{L^{2}(\Omega)}+N_{2}).
$$
Then by Campanato embedding theorem, it follows that $u\in C^{\alpha}(\overline{\Omega})$ and
$$\|u\|_{C^{\alpha}(\overline{\Omega})}\leq C(\|u\|_{L^{2}(\Omega)}+N_{2}),
$$
where $C=C(n,\alpha_{1},\alpha_{2},N_{1},C_{V},\text{diam}(\Omega),\mu,\nu,\bar{r})$. Thus we finish the proof of Theorem \ref{global}.


\end{document}